\documentclass[11pt,reqno]{amsproc}
\usepackage[margin=1in]{geometry}
\usepackage{amsmath, amsthm, amssymb}
\usepackage{times, esint,stackrel}
\usepackage{enumitem}
\usepackage{color}  
\usepackage[colorlinks=true]{hyperref}
\usepackage[color=yellow]{todonotes}
\usepackage{mathtools}
\usepackage{etoolbox}
\usepackage{mathrsfs}
\usepackage{framed}

\makeatletter
\patchcmd\@thm
  {\let\thm@indent\indent}{\let\thm@indent\noindent}%
  {}{}
\makeatother

\usepackage{etoolbox}
\expandafter\patchcmd\csname\string\proof\endcsname
  {\normalparindent}{0pt }{}{}


\newcommand{\be}{\begin{equation}}
\newcommand{\ee}{\end{equation}}
\newcommand{\bea}{\begin{eqnarray}}
\newcommand{\eea}{\end{eqnarray}}

\newtheorem{thm}{Theorem}
\newtheorem{prop}{Proposition}
\newtheorem{lemma}{Lemma}

\theoremstyle{definition}
\newtheorem{rem}{Remark}

\newcommand{\rmd}{{\rm d}}

\newcommand{\ol}[1]{\mkern 1.5mu\overline{\mkern-1.5mu#1\mkern-1.5mu}\mkern 1.5mu}

\def\curl{\text{curl}}

\newcommand{\bq}{\begin{equation}}
\newcommand{\eq}{\end{equation}}
\newcommand{\bqa}{\begin{eqnarray*}}
\newcommand{\eqa}{\end{eqnarray*}}

\newcommand{\tr}{{\rm tr}}

\title[Circulation and Energy Theorem Preserving Stochastic Fluids]{\vspace{-22.5mm}Circulation and Energy Theorem Preserving Stochastic Fluids}
\author{Theodore D. Drivas}
\address{Department of Mathematics, Princeton University, Princeton, NJ 08544}
\email{tdrivas@math.princeton.edu}
\author{Darryl D. Holm}
\address{Department of Mathematics, Imperial College, London SW7 2AZ, UK.}
\email{d.holm@imperial.ac.uk }

\date{today}
\begin{document}

\begin{abstract}
Smooth solutions of the incompressible Euler equations are characterized by the property that circulation around material loops is conserved. This is the Kelvin theorem  \cite{Kelvin69}.   Likewise, smooth solutions of Navier-Stokes are characterized by a generalized Kelvin's theorem, introduced by Constantin--Iyer (2008) \cite{CI08}.   In this note, we introduce a class of stochastic fluid equations, whose smooth solutions are characterized by natural extensions of the Kelvin theorems of their deterministic counterparts, which hold along certain noisy flows.  These equations are called the \emph{stochastic Euler--Poincar\'{e}} and \emph{stochastic Navier-Stokes--Poincar\'{e}} equations respectively. The stochastic Euler--Poincar\'{e} equations were previously derived from a stochastic variational principle by Holm (2015) \cite{DH15}, which we briefly review.
Solutions of these equations do not obey pathwise energy conservation/dissipation in general. In contrast, we also discuss a class of stochastic fluid models, solutions of which possess energy theorems, but do not, in general, preserve circulation theorems.
\end{abstract}

\vspace*{23mm}
\maketitle


\vspace{-6mm}

 \section{Introduction}

In 1869, Lord Kelvin (Sir William Thomson) \cite{Kelvin69} discovered a beautiful property of smooth solutions of the incompressible Euler equations. Namely, the circulation of velocity around any closed loop advected by an ideal fluid is conserved.  More precisely, let the spatial domain of flow be $\Omega=\mathbb{T}^d$ or $\mathbb{R}^d$, and  suppose the fluid velocity $u_t:= u(x,t):\Omega\times [0,T]\to \mathbb{R}^d$ solves the incompressible Euler equations,
\begin{align}\label{ee1}
\begin{split}
\partial_t u_t + (u_t\cdot  \nabla)  u_t &= -\nabla p_t,\\
\nabla \cdot u_t&= 0, \\
u_t|_{t=0} &= u_0,
\end{split}
\end{align}
with scalar pressure function $p_t$, determined by solving the Poisson equation
\be\label{presspoiss}
-\Delta p_t = (\nabla\otimes \nabla): (u_t\otimes    u_t ),
\ee
which enforces incompressibility at each time, $t$.
The \emph{Kelvin theorem} states that any smooth Euler solution $u_t$ has the property that for all loops $\Gamma \subset \Omega$, the circulation integral satisfies, 
\be\label{KelvinTheorem}
\oint_{X_t(\Gamma)} u_t \cdot \rmd \ell = \oint_\Gamma u_0\cdot \rmd \ell\,,
\ee
where $X_t$ is Lagrangian flow satisfying $\dot{X}_t= u_t(X_t)$,
$X_{0} ={\rm id}$. 
 The  Kelvin Theorem offers an elegant interpretation of the Lagrangian laws of vortex motion written down by Helmholtz in 1858 \cite{Helmholtz58}.  
   
Remarkably, the converse implication also holds. That is, any sufficiently regular incompressible velocity field possessing the property \eqref{KelvinTheorem} for all times $t\in [0,T]$ and for any closed, rectifiable loop $\Gamma\subset\Omega$ must, in fact, be a smooth Euler solution. This follows readily from \eqref{KelvinTheorem}, since its time derivative implies that
    \be\label{transthm}
   \oint_{X_t(\Gamma)} \big(\partial_t u_t + (u_t\cdot \nabla )u_t   \,+\, (\nabla u_t)^T\cdot  u_t  \big) \cdot \rmd \ell = 0\,
   \ee   
for all loops $\Gamma$ and all times $t\in [0,T]$.  In particular, Eq. \eqref{transthm} holds for the rectifiable loop $\Gamma = X_{t'}^{-1}(\Gamma')$ for any fixed $t'\in [0,T]$  (since $X_t$ is a diffeomorphism).  For such a loop, evaluating Eq. \eqref{transthm} at time $t=t'$ shows that the line integral vanishes when $X_{t}(\Gamma)$ above is replaced by an \emph{arbitrary} loop $\Gamma'$.  
From Stokes theorem, Eq. \eqref{transthm} holds for all loops $\Gamma'\subset \Omega$, if and only if there exists a scalar function $\pi_t = \pi(x,t)$ such that the integrand is equal to the gradient of this potential $\nabla \pi_t$ for all $(x,t)\in \Omega\times [0,T]$. Then, using the identity $(\nabla u_t)^T\cdot u_t = \nabla |u_t|^2/2$, one finds that Eq. \eqref{ee1} holds with $p_t$ replaced by $q_t:=\frac{1}{2}|u_t|^2 - \pi_t$.  Finally, to enforce incompressibility of $u_t$, the scalar function $q_t$  solves Eq. \eqref{presspoiss} thus fixing it as the pressure $q_t= p_t$ (up to a constant).
Therefore, one may say that smooth solutions of the Euler equations in the domain $\Omega$ are \emph{characterized} by the Kelvin theorem.  It is worth noting that this equivalence was already realized by Lord Kelvin in his original 1869 paper \cite{Kelvin69}.
    
    Kelvin's theorem has long been recognized as centrally important to the understanding of deterministic, smooth, ideal fluid dynamics. Its geometric meaning is discussed in Appendix \ref{app-GeometBckgrnd}. One might then ask the following question: How would Kelvin's theorem be changed, if the fluid flow were stochastic?\smallskip

\subsubsection*{Stochastic Euler--Poincar\'{e} equations.}
In 2015, Holm \cite{DH15} introduced a family of stochastic partial differential equations (SPDEs) for fluid dynamics, whose smooth solutions possess a certain pathwise Kelvin theorem.  These equations arise from a stochastic variational principle, which we review in Appendix \ref{appendVar}.

Specifically,  let $(\Xi,\mathcal{F}, {\bold P})$ be a probability space with a filtration $\mathbb{F}$
of right continuous $\sigma$-algebras $(\mathcal{F}_t)_{t\geq 0}$. All of the $\sigma$-algebras are assumed to be $\mathbf{P}$--completed. Let  $\{W_t^{(k)}\}_{k\in \mathbb{N}}$ be a collection of  $\mathcal{F}_t$-adapted  independent 1-dimensional Brownian motions  in $\mathbb{R}$.  
The Stratonovich form of the circulation-theorem preserving stochastic Euler--Poincar\'{e} equations introduced in \cite{DH15} for the 1-form $u_t$ read
 \be\label{e1}
 \rmd u_t  + \mathbb{P}(\pounds_{u_t}^T u_t ) \rmd t + \sum_{k}   \mathbb{P}(\pounds_{\xi^{(k)}}^T u_t ) \circ \rmd W_t^{(k)}  = 0
 \,,\quad\hbox{with}\quad
 u_t|_{t=0} = u_0 \,.
 \ee
Here, $ \mathbb {P}$ is the dual for 1-forms of the standard Leray projection operator for vector fields, as discussed in Appendix \ref{app-GeometBckgrnd}. The symbol $\circ$ denotes the Stratonovich sense of the stochastic product, the collection $\{\xi^{(k)}\}_{k\in \mathbb{N}}$ contains  fixed, deterministic divergence-free vector fields $\xi^{(k)}: \Omega\mapsto \mathbb{R}^d$, and we define the operator $\pounds_v^T$ as
\begin{align}\label{e2}
\begin{split}
\pounds^T_{v} u_t  &:=  v\cdot \nabla u_t + (\nabla v)^T \cdot u_t 
\\&= - \,v \times {\rm curl} u_t + \nabla (v\cdot u_t)
\quad \hbox{in 3D.}
\end{split}
\end{align}
In index notation, $(\pounds^T_{v} u)_i= v^j\partial_j u_i + (\partial_i v^j)u_j$. The geometric justification for choosing to write the nonlinearity in the $\pounds^T$ form is explained in Appendix \ref{app-GeometBckgrnd}.  Smooth solutions of the stochastic Euler--Poincar\'{e} equations in Eqns. \eqref{e1} possess a stochastic Kelvin theorem, which we describe in Theorem \ref{KelvinTheorem} below. 
\vspace{-1mm}

 \section{Main Results}
In this paper, we prove that the stochastic Euler--Poincar\'{e} equations are, in fact, characterized by the pathwise Kelvin theorem \eqref{KelvinTheorem}.
For this purpose,  we consider a class of abstract stochastic  It\^o SPDEs 
 \begin{align}\label{uEqn1}
 \rmd u_t +\mathbb{P} f_t \rmd t +\sum_{k} \mathbb{P}   \sigma_t^{(k)}    \rmd W_t^{(k)} &=0 \,,\quad\hbox{with}\quad
 u_t|_{t=0} = u_0 \,.
\end{align}
 The system  \eqref{uEqn1} maintains incompressibility $\nabla \cdot u_t =0$ while its solutions exist.   
Equation \eqref{uEqn1} for the 1-form $u_t$ is to be understood in the weak sense: for any solenoidal test vector field $\varphi\in C_0^\infty(\Omega)$, the following equality holds
\be\label{weakform}
\langle u_t, \varphi\rangle_{L^2}= \langle u_0, \varphi\rangle_{L^2} - \int_0^t\langle f_t, \varphi\rangle_{L^2}\rmd t- \sum_{k} \int_0^t     \langle  \sigma_t^{(k)}, \varphi\rangle_{L^2}\rmd W_t^{(k)} .
\ee
We prove the following result.

 \begin{thm}[Characterization of Stochastic Euler--Poincar\'{e} Fluids]\label{ThmKE}
Let $X_t$ be the flow defined by the SDE
  \be\label{flow}
 \rmd X_t(x) = u_t(X_t(x)) \rmd t +\sum_{k} \xi^{(k)}(X_t(x)) \circ  \rmd W_t^{(k)}  , \qquad X_0(x)=x\,,
 \ee
 for fixed smooth solenoidal vector fields $u_t: [0,T] \times \Omega \mapsto\mathbb{R}^d$ and  $\{\xi^{(k)}\}_{k\in \mathbb{N}}:\Omega\mapsto \mathbb{R}^d$.
 
Then, $u_t$ is a smooth solution of \eqref{uEqn1} on $[0,T] \times \Omega$ with 
 \begin{align}\label{feq}
 f_t &=  \pounds_{u_t}^T   u_t -\sum_k\frac{1}{2} \pounds_{\xi^{(k)}}^T( \pounds_{\xi^{(k)}}^T u_t)
 \,,\\
  \sigma_t^{(k)} &= \pounds_{\xi^{(k)}}^T u_t
  \,,\label{sigeq}
   \end{align}
   if and only if, for every rectifiable loop $\Gamma\subset \Omega$, $u_t$ has the property that for all $t\in [0,T]$, 
   \be\label{KelvinThm}
   \oint_{X_t(\Gamma)} u_t \cdot \rmd \ell =    \oint_{\Gamma} u_0 \cdot \rmd \ell, \quad    \mathbf{P} \ a.s.
   \ee
 \end{thm}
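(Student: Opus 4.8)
The plan is to prove the two implications by computing the stochastic differential of the circulation integral $\oint_{X_t(\Gamma)} u_t\cdot\rmd\ell$ using It\^o calculus, and showing that this differential vanishes identically if and only if $u_t$ solves \eqref{uEqn1} with the specified $f_t$ and $\sigma_t^{(k)}$. The central tool is the stochastic Kunita--It\^o--Wentzell (KIW) transport formula for the pullback of a time-dependent 1-form $u_t$ along the stochastic flow $X_t$ generated by \eqref{flow}. Concretely, I would first recall that for the flow \eqref{flow}, the pullback $X_t^*u_t$ evolves according to a formula whose drift and diffusion parts each involve a Lie-derivative contribution from the flow's vector fields $u_t$ and $\xi^{(k)}$, \emph{plus} the intrinsic stochastic differential $\rmd u_t$ of the 1-form itself. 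Since circulation around a material loop equals the integral of the pullback over the fixed loop $\Gamma$, namely $\oint_{X_t(\Gamma)}u_t\cdot\rmd\ell = \oint_\Gamma X_t^*u_t$, establishing the equivalence reduces to showing $\rmd(X_t^*u_t)$ is an exact differential (a total gradient) precisely when the equations of motion hold.

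The key computation proceeds as follows. First I would convert the abstract It\^o equation \eqref{uEqn1} with $\sigma_t^{(k)}=\pounds_{\xi^{(k)}}^T u_t$ into its equivalent Stratonovich form; the It\^o--Stratonovich correction produces exactly the term $-\tfrac12\sum_k \pounds_{\xi^{(k)}}^T(\pounds_{\xi^{(k)}}^T u_t)$, which is why this double-Lie-derivative term appears in the drift \eqref{feq}. This recovers the Stratonovich Euler--Poincar\'e form \eqref{e1}, modulo the Leray projection. Working in Stratonovich form is advantageous because the KIW formula for the pullback then obeys the ordinary chain rule: one finds
\be
\rmd(X_t^*u_t) = X_t^*\big(\rmd u_t + \pounds_{u_t}u_t\,\rmd t + \textstyle\sum_k \pounds_{\xi^{(k)}}u_t\circ\rmd W_t^{(k)}\big),
\ee
where $\pounds_v$ here denotes the full Lie derivative on 1-forms. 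The next step is to relate $\pounds_v$ to the operator $\pounds_v^T$ defined in \eqref{e2}. Since $v$ is divergence-free, a standard identity gives $\pounds_v u = \pounds_v^T u - \nabla(v\cdot u)$, so the two differ by an exact 1-form (a gradient). Substituting the Stratonovich form of \eqref{uEqn1} for $\rmd u_t$, the non-gradient parts cancel, leaving $\rmd(X_t^*u_t) = X_t^*(\rmd(\text{exact form}))$, which is itself exact since pullback commutes with the exterior derivative $\rmd$.

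For the forward implication I would then integrate this exact differential over the closed loop $\Gamma$: the integral of any gradient around a closed rectifiable loop vanishes, so $\rmd\oint_\Gamma X_t^*u_t = 0$, giving \eqref{KelvinThm} $\mathbf{P}$-a.s. For the converse, I would argue that if \eqref{KelvinThm} holds for \emph{every} rectifiable loop $\Gamma$, then by the same KIW computation the pullback of the full differential of $u_t$ (now treating $\rmd u_t$ as an unknown) must integrate to zero around every loop; since $X_t$ is a diffeomorphism and $\Gamma$ is arbitrary, this forces the semimartingale 1-form $\rmd u_t + \mathbb{P}(\cdots)$ to be exact, hence in the range of the gradient, which by the Leray decomposition and matching of the drift and each independent Brownian diffusion coefficient separately pins down $f_t$ and $\sigma_t^{(k)}$ to the claimed expressions. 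I expect the main obstacle to be the careful justification of the converse direction: one must separate the identity into its bounded-variation (drift) part and the coefficient of each independent $W^{(k)}$, invoking uniqueness of the semimartingale decomposition, and then argue rigorously that "exact for all loops" plus the Leray projection structure determines the coefficients uniquely up to gradients—handling the Stratonovich-to-It\^o conversion consistently on both sides so that the quadratic-variation correction is not double-counted.
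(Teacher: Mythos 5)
Your forward direction (SPDE $\Rightarrow$ Kelvin) is correct and takes a genuinely different route from the paper. The paper proves a general It\^o-form circulation transport identity (Proposition \ref{propCirc}) by parametrizing the loop and applying the It\^o product rule and It\^o--Wentzell formula term by term, with the expansion \eqref{lieComps} entering explicitly; Kelvin's theorem then follows by substituting \eqref{feq}--\eqref{sigeq} and watching the right side of \eqref{circulations} vanish. You instead pass to the Stratonovich form \eqref{e1}, invoke the Stratonovich Kunita--It\^o--Wentzell pullback formula for 1-forms, and observe that after substitution $\rmd(X_t^*u_t)$ is the pullback of exact forms, which integrate to zero on closed loops. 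This is cleaner and more geometric; its cost is that the Stratonovich pullback formula for a form-valued semimartingale driven by the \emph{same} noise as the flow is itself a nontrivial theorem that you must cite or prove (the paper in effect proves its It\^o avatar by hand). One correction: your identity ``$\pounds_v u = \pounds_v^T u - \nabla(v\cdot u)$'' is garbled --- the Lie derivative of the 1-form $u^\flat$ along $v$ is exactly $(\pounds_v^T u)^\flat$ (see \eqref{Lie-deriv-1form} and \eqref{lietrans}); what you wrote is Cartan's formula for the contraction $i_v \rmd u^\flat$. This does not damage the argument, since all that is used is that $w-\mathbb{P}w$ is a gradient.

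The converse direction has a genuine gap, sitting precisely at the step you flagged but did not resolve. In the converse, $u_t$ solves the abstract It\^o equation \eqref{uEqn1} with \emph{unknown} coefficients $f_t$, $\sigma_t^{(k)}$, which are merely adapted processes entering the weak form \eqref{weakform}. Your plan is to run the Stratonovich pullback computation again, but the Stratonovich form of \eqref{uEqn1} is not available: converting $\sum_k \mathbb{P}\sigma_t^{(k)}\,\rmd W_t^{(k)}$ to a Stratonovich integral requires the cross-variation $[\mathbb{P}\sigma^{(k)},W^{(k)}]_t$, i.e.\ the martingale structure of $\sigma_t^{(k)}$ itself --- which is exactly what you are trying to determine. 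The only usable data is the It\^o decomposition, so the circulation differential must be computed in It\^o form, with the Wentzell correction (coming from the martingale part $-\sum_k \nabla\mathbb{P}\sigma_t^{(k)}\rmd W_t^{(k)}$ of $\nabla u_t$) and the cross-variation of $\nabla X_t$ with $u_t(X_t)$ written out explicitly; this is the content of Proposition \ref{propCirc}, and without it your proposal has no formula to which the semimartingale-uniqueness argument can be applied. Note also the logical order this forces: one must \emph{first} identify the noise --- constancy of circulation kills the quadratic variation, hence $\oint_{X_t(\Gamma)}\big(\pounds_{\xi^{(k)}}^T u_t-\sigma_t^{(k)}\big)\cdot\rmd\ell=0$ for every loop, whence (using that $X_t$ is a diffeomorphism, so vanishing circulation on all loops gives exactness) $\mathbb{P}\sigma_t^{(k)}=\mathbb{P}\pounds_{\xi^{(k)}}^T u_t$ --- and only \emph{then} are the It\^o correction terms in the drift computable, so that the bounded-variation part can be matched against $f_t$ to yield \eqref{feq} up to a gradient killed by $\mathbb{P}$. (Identifying $\sigma^{(k)}$ only up to a gradient is sufficient, since only $\mathbb{P}\sigma^{(k)}$ enters \eqref{uEqn1}, and by \eqref{vanishingOnGrad} the gradient ambiguity propagates harmlessly through the correction terms; the paper instead gets pointwise identification by combining its quadratic-variation formula with Lemma \ref{jacobian} and loops with linearly independent tangents.) Your sketch gestures at all of these ingredients but supplies none of the computation that makes them cohere; filling it in essentially reproduces the paper's proof.
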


\begin{rem}[It\^o form of the  Stochastic Euler--Poincar\'{e} Equations]
The It\^o form of equation \eqref{e1} reads
  \begin{align}\label{e1Ito}
 \rmd u_t + \mathbb{P} \Big(   \pounds_{u_t}^T u_t - \frac{1}{2}  \sum_k \pounds_{\xi^{(k)}}^T( \pounds_{\xi^{(k)}}^T u_t)\Big)  \rmd t + \sum_{k} \mathbb{P}   \pounds_{\xi^{(k)}}^T u_t   \rmd W_t^{(k)} &=0.
\end{align}
This follows from the Stratonovich-to-It\^o conversion
\begin{align*}
\int_0^t \mathbb{P}   \pounds_{\xi^{(k)}}^T u_s \circ  \rmd W_s^{(k)}  &= \int_0^t \mathbb{P}   \pounds_{\xi^{(k)}}^T u_s   \rmd W_s^{(k)}  + \frac{1}{2} \left[  \mathbb{P}   \pounds_{\xi^{(k)}}^T u , W^{(k)}\right]_t\\
&= \int_0^t \mathbb{P}   \pounds_{\xi^{(k)}}^T u_s   \rmd W_s^{(k)}  - \frac{1}{2} \int_0^t \mathbb{P}   \pounds_{\xi^{(k)}}^T(\mathbb{P}   \pounds_{\xi^{(k)}}^T u_s) \rmd s,
\end{align*}
where $\left[\cdot, \cdot\right]_t$ denotes the quadratic variation and where we have used Eqn. \eqref{e1} to compute this cross-variation.    To obtain \eqref{e1Ito}, we note that for any 1-form $v$, we have $\mathbb{P}\pounds_{\xi}^T\mathbb{P} v= \mathbb{P}\pounds_{\xi}^T v$.  To see this, write   $\mathbb{P} v= v+\nabla q$ for some scalar $q$ and note that $\pounds_{\xi}^T\nabla q$ is a gradient,
\be\label{vanishingOnGrad}
\pounds_{\xi}^T\nabla q = (\xi \cdot \nabla)\nabla q  + \nabla \xi \cdot \nabla q= \nabla (\xi\cdot \nabla q).
\ee
Hence, $\mathbb{P} \pounds_{\xi}^T\nabla q=0$ and equation \eqref{e1Ito} follows.  In view of Theorem \ref{ThmKE}, we recover the stochastic Euler--Poincar\'{e} equations \eqref{e1Ito} as the unique equations for which smooth solutions obey pathwise circulation conservation along the stochastic flow \eqref{flow}. 
\end{rem}

  \begin{rem} [{Regularity of Flow}] \label{remReg}
Provided that $\sum_{k} \|\xi^{(k)}\|^2_{C^{n+3,\alpha'}(\Omega)}<\infty$ for some  $n\in \mathbb{N}$   and  $\alpha'\in (0,1)$, and $u\in C(0,T; C^{n+1,\alpha}(\Omega))$  for any $\alpha\in (0,\alpha')$, then equation \eqref{flow} generates a flow of $C^{n+1,\alpha}$--diffeomorphisms of $\Omega$ \cite{K97,LJW84}.  Moreover, the inverse map $A_t:= X_t^{-1}$  exists and belongs to the same space $C(0,T; C^{n+1,\alpha}(\Omega))$ and the gradient belongs to $\nabla X_t\in  C(0,T; C^{n,\alpha}(\Omega))$.  This is sufficient regularity to justify the computations of the present paper, in particular the use of the It\^{o}--Wentzell formula \cite{K97,K81}.
    \end{rem}
    
 \begin{rem} [Local Existence and Regularity for Euler--Poincar\'{e} Fluids]\label{localExRem}
Well--posedness for equations \eqref{uEqn1} with \eqref{feq} and \eqref{sigeq} has recently been established in \cite{CFH17}. In \S 3.3 of \cite{CFH17}, it is shown there exists (for data in the appropriate Sobolev space) a maximal stopping time $\tau_{max}: \Xi\mapsto [0,\infty)$ and a unique solution $u\in C(0,\tau;W^{3,2}(\mathbb{T}^3;\mathbb{R}^3))$ for all $\tau\leq \tau_{max}$.  Subsequently,  \cite{F18}  established local existence of \eqref{e1}--\eqref{e2} in H\"{o}lder spaces $C(0,T; C^{n+1,\alpha}(\Omega))$  for some  $k\in \mathbb{N}$ and some $\alpha\in (0,1)$, by using the Weber formula \eqref{weber} and following the Eulerian--Lagrangian scheme of Constantin \cite{C01}. Thus, in view of Remark \ref{remReg}, regularity in the appropriate H\"{o}lder spaces can be taken as the precise meaning of ``smooth" in Theorem \ref{ThmKE} as well as in Proposition \ref{propCirc} and Theorem \ref{ThmKENS} appearing below. 
 \end{rem}

The key to the proof of Theorem \ref{ThmKE} is a general formula for the transport of circulations along the stochastic flow \eqref{flow}, where the velocity $u_t$ is a stochastic process driven by the same Brownian noise $\{W_t^{(k)}\}_{k\in \mathbb{N}}$.

   \begin{prop}[Stochastic Circulation Transport]\label{propCirc}
 Fix smooth vector fields $\xi^{(k)}:\Omega\mapsto \mathbb{R}^{d}$. Let $u_t: [0,T]\times \Omega\mapsto \mathbb{R}^d$ be a smooth solution of equation \eqref{uEqn1} and  $X_t:=X_t(x)$ be the stochastic flow defined by the SDE
  \be\label{flow2}
 \rmd X_t(x) = u_t(X_t(x)) \rmd t +\sum_{k} \xi^{(k)}(X_t(x))\circ  \rmd W_t^{(k)}  , \qquad X_0(x)=x.
 \ee
Then, for any rectifiable loop $\Gamma\subset \Omega$, the following holds for $t\in[0,T]$
\begin{align}\nonumber
\rmd \oint_{X_t(\Gamma)} u_t\cdot \rmd \ell  &=  \oint_{X_t(\Gamma)}  \left(\pounds_{u_t}^T   u_t -f_t \right)  \rmd t  \cdot \rmd \ell \\\nonumber
          &\quad +  \sum_k \oint_{X_t(\Gamma)} \left(\frac{1}{2} \pounds_{\xi^{(k)}}^T( \pounds_{\xi^{(k)}}^T u_t)-  \pounds_{\xi^{(k)}}^T\sigma_t^{(k)} \right)  \rmd t  \cdot \rmd \ell \\
  &\quad + \sum_k   \oint_{X_t(\Gamma)} \left( \pounds_{\xi^{(k)}}^T u_t - \sigma_t^{(k)}  \right) \rmd W_t^{(k)}\cdot \rmd \ell.
   \label{circulations}
\end{align}
 \end{prop}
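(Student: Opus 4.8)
The plan is to transport the circulation back to the fixed reference loop $\Gamma$, reducing the statement to the computation of a single semimartingale differential. Writing the circulation as the integral of the pulled-back one-form,
\be
\oint_{X_t(\Gamma)} u_t\cdot\rmd\ell = \oint_\Gamma X_t^\ast u_t, \qquad (X_t^\ast u_t)_a(x) = u_{t,i}(X_t(x))\,\pa_a X_t^i(x),
\ee
and using that $\Gamma$ is deterministic and $t$-independent, I would commute the stochastic differential with the (fixed) spatial line integral, so that $\rmd\oint_\Gamma X_t^\ast u_t = \oint_\Gamma \rmd(X_t^\ast u_t)$. Everything then rests on computing $\rmd(X_t^\ast u_t)$ and recognizing its drift and martingale coefficients.

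For that computation I would keep $u_t$ in its It\^o form \eqref{uEqn1}, $\rmd u_{t,i} = -(\mathbb{P}f_t)_i\,\rmd t - \sum_k (\mathbb{P}\sigma_t^{(k)})_i\,\rmd W_t^{(k)}$, and convert only the flow \eqref{flow2} to It\^o, which produces the drift correction $\tfrac12\sum_k (\xi^{(k)}\cdot\nabla)\xi^{(k)}$. Applying the It\^o product rule to $(X_t^\ast u_t)_a = u_{t,i}(X_t)\,\pa_a X_t^i$ requires three ingredients: (i) $\rmd[u_{t,i}(X_t)]$, obtained from the It\^o--Wentzell formula (justified in the regularity class of Remark \ref{remReg} via \cite{K97,K81}), whose distinctive correction term is the cross-variation $-\sum_k \xi^{(k),j}\pa_j(\mathbb{P}\sigma^{(k)})_i\,\rmd t$ between the martingale part of $u_t$ and that of the flow; (ii) $\rmd[\pa_a X_t^i]$, obtained by differentiating the flow SDE in $x$ (legitimate in this regularity class, again by \cite{K97,LJW84}); and (iii) the quadratic covariation $\rmd\left[u_{\cdot,i}(X_\cdot), \pa_a X_\cdot^i\right]_t$ of these two semimartingales. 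The essential structural point is that, by working with the It\^o form of $u_t$ rather than its Stratonovich form, the only cross-variations that appear couple the noise coefficient $\mathbb{P}\sigma^{(k)}$ to the flow noise $\xi^{(k)}$; no self-cross-variation of the \emph{a priori} arbitrary field $\sigma^{(k)}$ is ever needed, which is why the final formula depends only on the data $f_t,\sigma_t^{(k)}$ and $\xi^{(k)}$.

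The last step is to assemble these pieces and identify the coefficients. Using the index expression $(\pounds_v^T w)_i = v^j\pa_j w_i + (\pa_i v^j)w_j$, which is exactly the Lie derivative of the one-form $w$ along $v$, I would verify that the martingale coefficient of $\rmd(X_t^\ast u_t)$ collapses to $X_t^\ast(\pounds_{\xi^{(k)}}^T u_t - \mathbb{P}\sigma^{(k)})$, and that the drift coefficient collapses to $X_t^\ast(\pounds_{u_t}^T u_t - \mathbb{P}f_t)$ plus $\sum_k X_t^\ast(\tfrac12\pounds_{\xi^{(k)}}^T(\pounds_{\xi^{(k)}}^T u_t) - \pounds_{\xi^{(k)}}^T\mathbb{P}\sigma^{(k)})$. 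Changing variables back via $\oint_\Gamma X_t^\ast g = \oint_{X_t(\Gamma)} g\cdot\rmd\ell$ and using that gradients integrate to zero around a closed loop — together with the identity \eqref{vanishingOnGrad}, $\pounds_{\xi}^T\nabla q = \nabla(\xi\cdot\nabla q)$, which shows $\pounds_\xi^T$ preserves the class of gradients — lets me replace each $\mathbb{P}f_t,\mathbb{P}\sigma^{(k)}$ by $f_t,\sigma^{(k)}$ inside the circulation integrals, yielding exactly \eqref{circulations}. I expect the main obstacle to be the bookkeeping in this last step: one must check that the three second-order contributions (the It\^o correction of the flow drift, the Hessian term $\tfrac12\sum_k \xi^{(k),j}\xi^{(k),l}\pa_j\pa_l u_{t,i}$ from It\^o--Wentzell, and the Jacobian cross-variation (iii)) recombine precisely into $\tfrac12\pounds_{\xi^{(k)}}^T(\pounds_{\xi^{(k)}}^T u_t)$, a somewhat delicate index computation that relies on the commutation of mixed partial derivatives.
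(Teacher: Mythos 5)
Your proposal is correct and follows essentially the same route as the paper's proof: both parametrize the circulation as an integral of the pulled-back one-form over the fixed loop, convert the flow to It\^{o} form, apply the It\^{o} product rule together with the It\^{o}--Wentzell formula (reading the Wentzell cross-variation off the It\^{o} form of \eqref{uEqn1}, so only the $\xi^{(k)}$--$\mathbb{P}\sigma^{(k)}$ coupling appears), discard projections using \eqref{vanishingOnGrad} and the vanishing of gradients on closed loops, and recombine the three second-order contributions into $\tfrac12\pounds_{\xi^{(k)}}^T(\pounds_{\xi^{(k)}}^T u_t)$ via the expansion the paper records as \eqref{lieComps}. The only difference is notational (pullback language versus the paper's explicit parametrization $\Gamma(s)$), not mathematical.
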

 
  \begin{rem} [It\^{o}--Wentzell formula]
The noise appearing in the flow \eqref{flow2} is the same noise that drives the stochastic evolution of $u_t$.  Consequently, these objects are correlated and to compute the rate of change of circulation we employ the It\^{o}--Wentzell formula. This formula results in the presence of the term $\pounds_{\xi^{(k)}}^T\sigma_t^{(k)} $ in the second line of equation \eqref{circulations}.
  \end{rem}

 \begin{rem} [Pathwise Kelvin Theorem along  It\^o flow]
One could consider loops transported by the stochastic flow with It\^o noise instead of Stratonovich,
   \be
 \rmd Y_t(x) = u_t(Y_t(x)) \rmd t +\sum_k \xi^{(k)}(Y_t(x))   \rmd W_t^{(k)}, \qquad Y_0(x)=x.
 \ee 
An argument similar to that made to prove  Prop. \ref{propCirc} shows that for any rectifiable loop $\Gamma$, the following holds
\begin{align}\nonumber
\rmd \oint_{Y_t(\Gamma)} u_t\cdot \rmd \ell  &=  \oint_{Y_t(\Gamma)}  \left(\pounds_{u_t}^T   u_t  -f_t \right)  \rmd t  \cdot \rmd \ell \\\nonumber
                    &\quad +  \sum_k \oint_{Y_t(\Gamma)} \left(\  \frac{1}{2}\xi^{(k)} \otimes \xi^{(k)} : \nabla \otimes \nabla u_t +  \nabla \xi^{(k)} \cdot (\xi^{(k)} \cdot \nabla) u_t-  \pounds_{\xi^{(k)}}^T\sigma_t^{(k)} \right)  \rmd t  \cdot \rmd \ell \\
  &\qquad + \sum_k   \oint_{Y_t(\Gamma)} \left( \pounds_{\xi^{(k)}}^T u_t - \sigma_t^{(k)}  \right) \rmd W_t^{(k)}\cdot \rmd \ell. \label{circulationsIto}
  \end{align}
Formula \eqref{lieComps} in the proof below exhibits the terms that are omitted in  \eqref{circulationsIto} relative to the full double-Lie diffusion appearing in equation \eqref{circulations}.  Consequently, this alternative pathwise Kelvin theorem provides a characterization of smooth solutions $u_t$  to \eqref{uEqn1} with the same noise $  \sigma_t^{(k)} = \pounds_{\xi^{(k)}}^T u_t$ but with the drift
 \begin{align}\label{lieF}
 &f_t = \pounds_{u_t}^T   u_t \\
 &\  -\sum_k\left( \frac{1}{2}\xi^{(k)} \otimes \xi^{(k)} : \nabla \otimes \nabla u_t +  \nabla \xi^{(k)} \cdot (\xi^{(k)} \cdot \nabla) u_t + (\xi^{(k)}\cdot \nabla ) \xi^{(k)}\cdot \nabla u_t   +\nabla ( (\xi^{(k)}\cdot \nabla)\xi^{(k)}) \cdot u_t\right).\nonumber
   \end{align}
  Comparing \eqref{lieF} with \eqref{feq}, we see that along the It\^o flow $Y_t$ the double-Lie diffusion structure in the SPDE is lost. See \S 1 of \cite{DH15} for a discussion of a similar issue; quantification of the failure of circulation conservation along loops which are advected by  \emph{It\^{o}} flow given the velocity satisfies an SPDE, smooth solutions of which conserve circulation along loops evolving according to the flow with \emph{Stratonovich} noise. 
 \end{rem}
 
 \begin{rem}[Pathwise Weber Formula] \label{WeberRemark}
 A simple consequence of the calculations used in proofs of Theorem  \ref{ThmKE} and Proposition \ref{propCirc} is that smooth solutions of the stochastic Euler--Poincar\'{e} equations  \eqref{e1}--\eqref{e2}  satisfy a pathwise {\it Weber formula}:
\be\label{weber}
 u^\sharp_t(x)= \mathbb{P} \left[(\nabla A_t(x))^T u_0(A_t(x))\right]^\sharp, \quad   \mathbf{P} \ a.s.
\ee
where $A_t= X_t^{-1}$ is the ``back-to-labels" map and $X_t$ is the stochastic flow defined by \eqref{flow} and, when taking projection $\mathbb{P}$, there is an implied transformation from 1-forms to vector fields by the operation $\sharp$. See Appendix \ref{app-GeometBckgrnd} for more explanation of the notation $\sharp$.  
For a proof of the representation \eqref{weber}, see \cite{F18}.  This result can be expressed also at the level of vorticity $\omega_t = {\rm curl} (u_t)$, where one has 
an exact Cauchy formula of the form
\be\label{CauchyFomula}
\omega_t(x) = ((\nabla X_t)\omega_0)\circ A_t(x), \quad   \mathbf{P} \ a.s.
\ee
Cauchy's vorticity representation in \eqref{CauchyFomula} elucidates what is already apparent directly from \eqref{KelvinThm}. Namely, the circulation theorem may be expressed, using Stokes theorem, in terms of the flux of vorticity through advected areas.  Specifically, letting $S$ be any smooth bounding surface of the closed loop $\Gamma$ with $\Gamma=\partial S$, we have 
   \be\label{KelvinThmVort}
   \oint_{X_t(S)} \omega_t \cdot \rmd S=    \oint_{S} \omega_0 \cdot \rmd S, \quad    \mathbf{P} \ a.s.
   \ee
 \end{rem}

\begin{rem} [Pathwise Energy Preserving Stochastic Fluids]  \label{remEnergy1}
In general for non-constant $\{\xi^{(k)}\}_{k\in \mathbb{N}}$, the Equations \eqref{e1}--\eqref{e2} do not conserve energy, neither pathwise, nor in expectation.  See Remark \ref{remEnergy2} for more details.
Here, we briefly consider a class of stochastic fluid equations that, by design, conserves energy pathwise.  These can be expressed with Stratonovich noise in terms of the operator $B(w,v)= \mathbb{P}(w \cdot \nabla v)$ as 
\be\label{stratencons}
\rmd u_t + B(u_t\rmd t + \sum_k  \xi^{(k)}  \circ \rmd W_t^{(k)} , u_t).
\ee
 In It\^{o} form, Eqn. \eqref{stratencons} reads as  Eqn. \eqref{uEqn1} with
 \begin{align}\label{Econseqnsf} 
 f_t&:= u_t \cdot \nabla u_t -    \sum_k\xi^{(k)}  \cdot \nabla\mathbb{P} ( \xi^{(k)}  \cdot \nabla   u_t), \\
   \sigma_t^{(k)}&:= \xi^{(k)}  \cdot \nabla  u_t.
   \label{EconseqnsSig}
\end{align}
Versions of this model were previously considered in e.g. \cite{FG95,MR05} and discussed in \S 5.4 of \cite{F11}. 
We now verify pathwise energy conservation of the model \eqref{uEqn1} with  \eqref{Econseqnsf} and \eqref{EconseqnsSig}.  This property is most easily and directly established by using Stratonovich calculus and making use of well known properties of the $B(w,v)$ operator 
(see e.g. \cite{CF88}).   Since $\{\xi^{(k)}\}_{k\in \mathbb{N}}$ are divergence-free, we simply have
 \begin{align} 
 \frac{1}{2}\rmd \|u_t\|_{L^2}^2   &= -\Big( u_t, B\big(u_t \rmd t + \sum_k  \xi^{(k)} \circ \rmd W_t^{(k)} ,\, u_t\big)\Big)_{L^2(\Omega)} =0.
\end{align}
On the other hand, by Proposition \ref{propCirc} with $\sigma_t:=\sum_k \xi^{(k)} \cdot \nabla u_t$ and $f_t:=u_t\cdot \nabla u_t$, we have
\begin{align*}
\rmd \oint_{X_t(\Gamma)} u_t\cdot \rmd \ell  
&=   \sum_k \oint_{X_t(\Gamma)} 
\left(\frac{1}{2} \pounds_{\xi^{(k)}}^T(   \nabla \xi^{(k)}\cdot  u_t  ) \right)   \cdot \rmd \ell \   \rmd t+  \sum_k   \oint_{X_t(\Gamma)}  (\nabla \xi^{(k)}\cdot  u_t )\cdot \rmd \ell  \ \rmd W_t^{(k)}
\\&=
\sum_k   \oint_{X_t(\Gamma)}  (\nabla \xi^{(k)}\cdot  u_t) \cdot \rmd \ell  \circ \rmd W_t^{(k)}
.
\end{align*}
Thus, unless $\{\xi^{(k)}\}_{k\in \mathbb{N}}$ are spatially constant, the class of stochastic equations \eqref{uEqn1} with  \eqref{Econseqnsf} and \eqref{EconseqnsSig} which conserve energy pathwise are different with those that possess a pathwise Kelvin theorem.     
\end{rem}

\begin{rem}
Spatially constant noise coefficients $\{\xi^{(k)}\}_{k\in \mathbb{N}}$ define a privileged class of equations, solutions of which possess both circulation and energy conservation. 
In particular, when the $\{\xi^{(k)}\}_{k\in \mathbb{N}}$ are constants, the stochastic Euler--Poincar\'{e} equations are essentially deterministic Euler equations in disguise.  Specifically, let $u_t$ solve Eqn. \eqref{e1} and define
\be\label{vtou}
v_t(x):= u_t\left(x+ \sum_k \xi^{(k)} W^{(k)}_t\right).
\ee
  Then  the process  $v_t$ is incompressible $\nabla\cdot v_t=0$ and solves
\be\label{EvEqn}
\partial_t v_t + v_t\cdot \nabla v_t =-\nabla p_t ,
\ee
where $p_t$ solves the Poisson problem \eqref{presspoiss} to enforce incompressibility of the field $v_t$. Thus, formally, $v_t$ satisfies the usual deterministic Euler equation showing that these two equations have the same form.
To see this, suppose that a strong stochastic solution $u_t$ exists on $\Omega\times [0,T]$ (the existence of such a time $T$ is provide in \cite{CFH17}).  Using the It\^{o}--Wentzell formula in Stratonovich form \cite{K81} we obtain
\be\label{vevol}
\rmd v_t  = \left.\left(\rmd u_t + \sum_k \xi^{(k)}\cdot \nabla u_t \circ  \rmd W^{(k)}_t\right)\right|_{x+ \sum_k \xi^{(k)} {W^{(k)}_t}} \!\!\! \! = \rmd u_t\big|_{x+ \sum_k \xi^{(k)} {W^{(k)}_t}} + \sum_k \xi^{(k)}\cdot \nabla v_t \circ  \rmd W^{(k)}_t.
\ee
Now, our assumption of constant $\{\xi^{(k)}\}_{k\in \mathbb{N}}$ implies $  \mathbb{P}(\pounds_{\xi^{(k)}}^T u_t ) = \xi^{(k)} \cdot \nabla u_t$.  Thus,  using Eqn.  \eqref{e1} and  \eqref{vevol}, we obtain the equation,   $\rmd v_t + \mathbb{P}(v_t\cdot \nabla v_t)\rmd t =0$.  The classical time derivative $\partial_t v$ exists since $\mathbb{P}(v_t\cdot \nabla v_t)$ is continuous-in-time for each $x$ and, hence, Eqn. \eqref{EvEqn} follows.  Note that the change of variables above from $u_t$ to $v_t$ is not a Galilean transformation.

Since the transformation \eqref{vtou} is reversible, for sufficiently short times (while solutions exist), the unique stochastic solution $u_t$ of the SPDE \eqref{e1} can be recovered from the unique solution $v_t$ of deterministic Euler \eqref{EvEqn} by evaluating at a random spatial point
\be
u_t(x) = v_t \left(x- \sum_k \xi^{(k)} W^{(k)}_t\right).
\ee
Thus, as discussed in Chapter 5 of \cite{F11},  no regularizing effects can possibly come by adding this simple multiplicative noise to the Euler equations. If there is any non-trivial regularization-by-noise within the class of Euler-Poincar\'{e} models that we consider, it must arise due to spatial variation (and possibly solution dependence) of the noise correlates.

\end{rem}

Finally, we mention a related class of models in which the stochasticity is understood to arise from location uncertainty \cite{M14,M17a,M17b,M17c}.  These models also conserve energy pathwise but are distinct from all of those considered here.  In particular, they involve an additional division of the fields (velocity and pressure) into slow and fast fluctuating components and are obtained via a version of the Reynold's transport theorem.
\vspace{1mm}

\subsubsection*{Stochastic Navier-Stokes--Poincar\'{e} equations.}
In this note, we obtain also a class of natural stochastic generalizations of Navier-Stokes.  Similar to the Stochastic Euler--Poincar\'{e} equation, we ``randomize" the Navier-Stokes equations by insisting that they possess a certain analogue of the Kelvin theorem --  called the  Constantin--Iyer--Kelvin theorem --  which we now review.  In their paper \cite{CI08}, Constantin and Iyer proved that smooth solutions $u_t$ of the  Navier-Stokes equations
\begin{align}\label{ns1}
\partial_t u_t +(u_t\cdot  \nabla)  u_t&= -\nabla p_t +\nu \Delta u_t,\\
\nabla \cdot u_t&= 0, \\
u_t|_{t=0} &= u_0,\label{ns3}
\end{align}
are characterized by the following statistical Kelvin theorem; for all loops $\Gamma\subset \Omega$
\be\label{CIkelvin}
\int_\Gamma u_t \cdot \rmd \ell =\mathbb{E}\left[ \int_{A_t(\Gamma)} u_0\cdot \rmd \ell\right],
\ee
where $A_t:= X_t^{-1}$ is the back-to-labels map for the stochastic flow defined by the forward It\^o equation\footnote{Rather than introduce the back-to-labels map, the Constantin-Iyer Kelvin theorem can also be naturally stated in terms of time-reversed Brownian motion and backwards  It\^{o} SDEs \cite{DE17}.  For  detailed discussions of backward stochastic flows, see \cite{F06,K97}. }
\be\label{CITraj}
{\rmd} X_{t}(x)  = u_t(X_{t}(x)) \rmd t + \sqrt{2\nu} \ {\rmd} B_t, \qquad X_{0}(x)=x
\,.\ee
Here, $B_t$ is a $d$-dimensional standard Brownian motion.  The Constantin-Iyer Kelvin theorem has the
beautifully simple implication that smooth Navier-Stokes solutions are uniquely characterized as those velocity fields which have the property that circulations are \emph{backwards martingales} of the stochastic flow \eqref{CITraj}.

Unlike the pathwise Kelvin theorem \eqref{KelvinThm} which holds for solutions of the Stochastic Euler--Poincar\'{e} equations, \eqref{CIkelvin} is completely deterministic; since, the fluid velocity $u_t$ is a solution of equations \eqref{ns1}--\eqref{ns3}. 
The noise appearing in the flow \eqref{CITraj} is, in a sense, artificial. It plays a similar role as the noise used in Feynman-Kac representations for linear parabolic equations.  Namely, it is a mathematical tool to represent the Laplacian appearing in \eqref{ns1}.  However, unlike the Feynman-Kac representations  for linear equations, the stochastic Kelvin theorem \eqref{CIkelvin},\eqref{CITraj} constitutes a nonlinear fixed-point condition since the drift velocity in the trajectories \eqref{CITraj} is also the solution for which the circulation is computed \eqref{CIkelvin}.  In fact, a stochastic Weber formulation (equivalent to Kelvin theorem for smooth solutions) can be used to prove local existence of solutions of the incompressible Navier-Stokes \cite{I06}.  See also Remark \ref{stochWeber}, below.

We briefly recall some results connected to the formulation \eqref{CIkelvin}, \eqref{CITraj}. First, a different perspective on the Constantin-Iyer-Kelvin theorem was explored by Eyink in \cite{E10}, where it is shown that \eqref{CIkelvin} arises as a consequence of Noether's theorem via the particle relabelling symmetry of a certain stochastic action principle for the deterministic incompressible Navier-Stokes equations.  See also \cite{DE17} for a reformulation of Navier--Stokes as a system of stochastic Hamilton's equations, which yield a particularly simple derivation of the statistical Kelvin theorem.   This formulation has been since extended to domains with solid boundary \cite{CI11} and to a Riemannian manifold when the de Rham--Hodge Laplacian is the viscous dissipation operator \cite{FD18}.  Finally, Eyink \cite{E09} extended the work of Constantin and Iyer to nonideal hydromagnetic models.  There, a stochastic analogue of the classical Alfv\'{e}n theorem was proved to be equivalent to smooth solutions of the deterministic, nonideal, incompressible magnetohydrodynamic equations. \smallskip

 In what follows, we derive a class of SPDEs, smooth solutions of which possess (and are characterized by) a pathwise Constantin--Iyer Kelvin theorem. 
 We term these the \emph{stochastic Navier-Stokes--Poincar\'{e}} equations.
  Just as for \eqref{e1}--\eqref{e2}, these equations are driven by Brownian motions $\{W_t^{(k)}\}_{k\in \mathbb{N}}$ defined on the probability space $(\Xi,\mathcal{F}, {\bold P})$.  Relative to equations \eqref{e1}--\eqref{e2}, the stochastic Navier-Stokes--Poincar\'{e} equations contain additional terms which can be regarded as arising due to the presence of an artificial Brownian noise on the trajectories, just as in the Constantin-Iyer formalism.  This collection of 1-dimensional Brownian motions  $\{B_t^{(k)}\}_{k\in \mathbb{N}}$ is independent of the noise $\{W_t^{(k)}\}_{k\in \mathbb{N}}$.  We may now state our result.

 \begin{thm}[Characterization of Stochastic Navier-Stokes--Poincar\'{e} Fluids]\label{ThmKENS}
 
 Let $X_t$ be the flow defined by 
  \be\label{flowNS}
{\rmd}X_{t}(x) = u_t(X_{t}(x)) \rmd t +\sum_{k} \xi^{(k)}(X_{t}(x)) \circ  \rmd W_t^{(k)} + \sqrt{2\nu}\sum_{k} \eta^{(k)}(X_{t}(x))   \circ {\rmd} B_t^{(k)}  , \qquad X_{0}(x)=x,
 \ee
 for fixed smooth solenoidal vector fields $u_t: [0,T] \times \Omega \mapsto\mathbb{R}^d$ and   $\{\xi^{(k)}\}_{k\in \mathbb{N}}, \{\eta^{(k)}\}_{k\in \mathbb{N}}:\Omega\mapsto \mathbb{R}^d$.

Then, $u_t$ is a smooth solution of equations \eqref{uEqn1}  on $[0,T] \times \Omega$ with  
 \begin{align}\label{feqNS}
 f_t &= \pounds_{u_t}^T   u_t -\sum_k\frac{1}{2} \pounds_{\xi^{(k)}}^T( \pounds_{\xi^{(k)}}^T u_t) -\nu\sum_k \pounds_{\eta^{(k)}}^T( \pounds_{\eta^{(k)}}^T u_t)
 \,,\\
  \sigma_t^{(k)} &= \pounds_{\xi^{(k)}}^T u_t
  \,,
  \label{sigeqNS}
   \end{align}
   if and only if, for every rectifiable loop $\Gamma$, $u_t$ has the property that for $t\in[0,T]$, conditioned on realizations of $\{W^{(k)}\}_{k\in \mathbb{N}}$, circulations are backwards martingales
   \be\label{CIKelvin2}
   \oint_{\Gamma} u_t \cdot \rmd \ell =    \mathbb{E}\left[\oint_{A_{t}(\Gamma)} u_0 \cdot \rmd \ell\ \Bigg|   \ \mathcal{F}_{t}^{\{W^{(k)}\}}\right], \quad   \mathbf{P} \ a.s.
   \ee
  where $A_{t}:= X_t^{-1}$ is the back-to-labels map and $\mathcal{F}_{t}^{\{W^{(k)}\}}$ is the sigma-algebra generated by the increments $W^{(k)}_{s}-  W^{(k)}_{s'}$, $0\leq s'<s\leq t$, $k\in \mathbb{N}$.
 \end{thm}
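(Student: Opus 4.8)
The plan is to reduce both implications of the equivalence to a single stochastic Weber identity. Let $A_t=X_t^{-1}$ be the back-to-labels map of the flow \eqref{flowNS} and let $w_t:=A_t^*u_0=(\nabla A_t)^T\,u_0(A_t)$ be the pullback of the fixed initial $1$-form $u_0$. By the change of variables $\oint_{A_t(\Gamma)}u_0\cdot\rmd\ell=\oint_\Gamma w_t\cdot\rmd\ell$, and since loop integrals annihilate gradients while $u_t$ is solenoidal ($\mathbb{P}u_t=u_t$), the backwards martingale property \eqref{CIKelvin2} holding for every rectifiable $\Gamma$ is equivalent to the single pointwise identity $\mathbb{P}\bar w_t=u_t$, where $\bar w_t:=\mathbb{E}[w_t\mid\mathcal{F}_t^{\{W^{(k)}\}}]$. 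It therefore suffices to compute $\bar w_t$ and compare it with $u_t$; this is the backward, conditionally-averaged analogue of the forward circulation transport of Proposition \ref{propCirc} and of the Weber representation \eqref{weber} in Remark \ref{WeberRemark}.

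First I would derive the evolution of $w_t$. Since $X_t^*w_t=X_t^*A_t^*u_0=u_0$ is constant in $t$, the Kunita--It\^o--Wentzell formula applied to the flow \eqref{flowNS} gives, in Stratonovich form,
\begin{equation*}
\rmd w_t + \pounds_{u_t}^T w_t\,\rmd t + \sum_k \pounds_{\xi^{(k)}}^T w_t\circ\rmd W_t^{(k)} + \sqrt{2\nu}\sum_k \pounds_{\eta^{(k)}}^T w_t\circ\rmd B_t^{(k)}=0 .
\end{equation*}
Converting to It\^o, the $\xi$-noise yields the drift correction $-\tfrac12\sum_k\pounds_{\xi^{(k)}}^T(\pounds_{\xi^{(k)}}^T w_t)$ and the independent $\eta B$-noise yields $-\nu\sum_k\pounds_{\eta^{(k)}}^T(\pounds_{\eta^{(k)}}^T w_t)$ (the coefficient $\sqrt{2\nu}$ entering squared and halved). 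This is the backward analogue of the computation behind Proposition \ref{propCirc}; note that, because the martingale part of $w_t$ is inherited entirely from the flow, these are self-corrections and there is no separate cross-term for the $\eta$-noise.

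Next I would take $\mathbb{E}[\,\cdot\mid\mathcal{F}_t^{\{W^{(k)}\}}]$. Using the independence of $\{B^{(k)}\}$ and $\{W^{(k)}\}$, the $\rmd B^{(k)}$-martingale has vanishing conditional mean, so the viscous drift $-\nu\sum_k\pounds_{\eta^{(k)}}^T(\pounds_{\eta^{(k)}}^T\bar w_t)$ survives as the residual It\^o drift of the averaged $\eta$-noise; since $u_t$ is $\mathcal{F}_t^{\{W^{(k)}\}}$-adapted and $\xi^{(k)},\eta^{(k)}$ are deterministic, the Lie operators and the $\rmd W^{(k)}$-integral commute past the conditioning, giving a closed It\^o SPDE for $\bar w_t$. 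Applying $\mathbb{P}$ and using $\mathbb{P}\pounds_{\xi}^T\mathbb{P}=\mathbb{P}\pounds_{\xi}^T$ from \eqref{vanishingOnGrad} (which lets each Lie operator act on $v_t:=\mathbb{P}\bar w_t$ in place of $\bar w_t$), I find that $v_t$ solves the \emph{linear} SPDE
\begin{equation*}
\rmd v_t + \mathbb{P}\big(\pounds_{u_t}^T v_t - \tfrac12\sum_k\pounds_{\xi^{(k)}}^T(\pounds_{\xi^{(k)}}^T v_t) - \nu\sum_k\pounds_{\eta^{(k)}}^T(\pounds_{\eta^{(k)}}^T v_t)\big)\rmd t + \sum_k\mathbb{P}\pounds_{\xi^{(k)}}^T v_t\,\rmd W_t^{(k)}=0 ,
\end{equation*}
with the advecting field frozen to $u_t$ and initial datum $v_0=\mathbb{P}u_0=u_0$. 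Evaluating this equation at $v_t=u_t$ returns precisely \eqref{uEqn1} with $f_t,\sigma_t^{(k)}$ given by \eqref{feqNS}--\eqref{sigeqNS}.

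Both directions then follow from uniqueness for this linear SPDE. If $u_t$ solves \eqref{uEqn1} with \eqref{feqNS}--\eqref{sigeqNS}, then $u_t$ is itself a solution of the displayed linear equation with datum $u_0$, whence $v_t=u_t$ and \eqref{CIKelvin2} holds; conversely, \eqref{CIKelvin2} forces $v_t=u_t$, and substituting into the linear equation just derived (which is valid for any smooth solenoidal $u_t$ driving the flow) shows that $u_t$ solves \eqref{uEqn1} with \eqref{feqNS}--\eqref{sigeqNS}. The hard parts are analytic rather than structural: justifying the Kunita--It\^o--Wentzell formula for the stochastic $1$-form $w_t$, whose martingale part is correlated with the driving noise of \eqref{flowNS}, at the H\"older regularity of Remarks \ref{remReg} and \ref{localExRem}; showing rigorously that conditioning on $\mathcal{F}_t^{\{W^{(k)}\}}$ annihilates the $\rmd B^{(k)}$-integral and commutes with the $\rmd W^{(k)}$-integral, which is exactly where the independence of the two noise families enters decisively, as in the Constantin--Iyer reduction; and invoking a uniqueness theorem for the linear transport SPDE. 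As a consistency check, when $\xi^{(k)}\equiv 0$ and the $\eta^{(k)}$ are the constant coordinate fields one has $\sum_k\pounds_{\eta^{(k)}}^T(\pounds_{\eta^{(k)}}^T u_t)=\Delta u_t$, and one recovers the classical Constantin--Iyer characterization of Navier--Stokes.
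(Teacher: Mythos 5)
Your proposal is correct, and its backbone is the same as the paper's: reduce \eqref{CIKelvin2} to the pointwise stochastic Weber identity, derive from the flow alone a closed linear SPDE for the conditionally averaged, Leray-projected Weber velocity (the paper's Eqn.~\eqref{linearprob}), and invoke uniqueness for that linear system (via Proposition 11 of \cite{CFH17}). The differences are in the packaging, and they are worth recording. First, you obtain the Stratonovich transport equation for $w_t=A_t^*u_0$ in one stroke from the constancy of $X_t^*w_t=u_0$ together with the Kunita--It\^o--Wentzell formula; the paper instead reconstructs the same equation \eqref{wevol} by hand, splitting $w_t=(\nabla A_t)^T\theta_t$ and applying the It\^o product rule in Lemmas \ref{thetaLemma} and \ref{wLem} --- your route is slicker, but the paper's computation is what keeps the argument self-contained at the regularity of Remark \ref{remReg}. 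Second, and more substantively, you make one lemma serve both implications: the direction ``Kelvin $\Rightarrow$ SPDE'' follows by substituting $v_t=u_t$ into the linear equation (which is valid for any smooth solenoidal adapted drift), and so needs no uniqueness at all, whereas the paper proves this direction separately through the weak formulation, pairing against solenoidal test fields and using Kunita's formula \eqref{KunflowEqn}. Your unification is more economical; what the paper's weak-form detour buys is the conclusion that the Weber representation yields a solution in the weak sense \eqref{weakform} (Definition 3 of \cite{CFH17}), i.e.\ under a weaker solution concept than the strong-form substitution requires. The analytic caveats you flag --- justifying the It\^o--Wentzell computation at H\"older regularity, commuting the conditioning on $\mathcal{F}_t^{\{W^{(k)}\}}$ with the $W$-integrals while annihilating the $B$-integrals, and uniqueness for the linear transport SPDE --- are exactly the points the paper also handles by citation or brief argument, so your proposal sits at the same level of rigor as the published proof.
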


 The idea above is that,  conditioning on the knowledge of the processes $\{W^{(k)}\}_{k\in \mathbb{N}}$ during $[0,t]$, we obtain a Constantin-Iyer-type circulation theorem \eqref{CIKelvin2} by averaging over the ``unresolved" Brownian motions $\{B^{(k)}\}_{k\in \mathbb{N}}$. The proof of Theorem \ref{ThmKENS} follows a different approach than that of Theorem \ref{ThmKE}. Instead of computing the rate of change of circulation and using the It\^o-Wentzell formula, we follow the original approach of \cite{CI08} and prove the equivalence of \eqref{uEqn1} with \eqref{feqNS} and \eqref{sigeqNS} with a fixed-point characterization in terms of a stochastic Weber formula.  This, in turn, is equivalent to the Kelvin theorem \eqref{CIKelvin2}.

\begin{rem}[Stochastic Fluids with Standard Viscous Friction]  \label{Remlaplacian}
If the $B^{(k)}$--noise amplitudes are constant and act only in the $d$ Euclidean directions $\{e_i\}_{i=1}^d$; that is, if 
\be\label{NSetas}
\{\eta^{(k)}\}_{k\in \mathbb{N}}= \{e_1, e_2, e_3, \dots, e_d, 0, 0,\cdots\},
\ee
 then $\nu\sum_k \pounds_{\eta^{(k)}}^T( \pounds_{\eta^{(k)}}^T u_t)= \nu \Delta u_t$ and the usual viscous Laplacian appearing in \eqref{ns1} is recovered.  Thus  \eqref{uEqn1} with \eqref{feqNS} and \eqref{sigeqNS} and $\{\eta^{(k)}\}_{k\in \mathbb{N}}$ given by \eqref{NSetas} form a family of stochastic generalizations of the deterministic Navier-Stokes which possess an exact analogue of the Constantin-Iyer Kelvin theorem  \eqref{CIKelvin2}.
\end{rem}

\begin{rem}[Energetic Properties of Circulation-Theorem Preserving Stochastic Fluids]\label{remEnergy2}
We now consider the energetics of the stochastic circulation--theorem--preserving models discussed here. Using  \eqref{uEqn1} with \eqref{feqNS} and \eqref{sigeqNS} (the case with \eqref{feq} and \eqref{sigeq} is obtained by setting $\nu\equiv 0$) we have by It\^{o}'s product rule  in the Hilbert space $L^2(\Omega)$ (see \cite{KR79})  that
\begin{align} 
\begin{split} 
\rmd \|u_t\|_{L^2(\Omega)}^2 &=\sum_k \left( u_t,  \frac{1}{2} \pounds_{\xi^{(k)}}^T( \pounds_{\xi^{(k)}}^T u_t) 
+ \nu \pounds_{\eta^{(k)}}^T( \pounds_{\eta^{(k)}}^T u_t) \right)_{L^2(\Omega)} \rmd t
+ \frac{1}{2} \int_\Omega\rmd \left[ u_t;u_t\right]_t\rmd x\\
&\qquad + \sum_k \left( u_t, \pounds_{\xi^{(k)}}^T u_t \,\rmd W_t^{(k)}\right)_{L^2(\Omega)}.
\end{split} 
\label{erg-circ-cons}
\end{align}
 Recall from \eqref{Lie-brkt} that the Lie derivative of a vector field $w$ is defined by $-\pounds_{\xi} w=[\xi, w]:= \xi\cdot \nabla w - w\cdot \nabla \xi$ and its adjoint operator satisfies the identity $( \pounds_{\xi}^T v, w)_{L^2(\Omega)}= - \langle v, \pounds_{\xi} w\rangle_{L^2(\Omega)}$, see  Eq. \eqref{adjointform}.
 Upon integrating by parts in \eqref{erg-circ-cons} using the adjoint relation and recalling that $u_t$ is divergence-free, we find
\begin{align} \nonumber
\rmd \|u_t\|_{L^2(\Omega)}^2 
&= -\frac{1}{2} \sum_k\left( \pounds_{\xi^{(k)}} u_t,\pounds_{\xi^{(k)}}^T u_t\right)_{L^2(\Omega)}  \rmd t - \nu   \sum_k \left(  \pounds_{\eta^{(k)}}u_t,  \pounds_{\eta^{(k)}}^T u_t \right)_{L^2(\Omega)} \rmd t\\
&\qquad + \frac{1}{2} \sum_k ( \mathbb{P} \pounds_{\xi^{(k)}}^T u_t, \pounds_{\xi^{(k)}}^T u_t)_{L^2(\Omega)}\rmd t + \sum_k (u_t,\nabla \xi^{(k)} \cdot u_t)_{L^2(\Omega)} \rmd W_t^{(k)}.\label{energyId}
\end{align}
Now, if $\xi$ and $v$ are divergence-free, then so is $\pounds_{\xi} v$. Consequently, we find that
\begin{align*} 
(  \pounds_{\xi^{(k)}} u_t,\pounds_{\xi^{(k)}}^T u_t)_{L^2(\Omega)}  -  ( \mathbb{P} \pounds_{\xi^{(k)}}^T u_t , \pounds_{\xi^{(k)}}^T u_t) _{L^2(\Omega)}&= (   \mathbb{P} (\pounds_{\xi^{(k)}} u_t- \pounds_{\xi^{(k)}}^T u_t), \pounds_{\xi^{(k)}}^T u_t)_{L^2(\Omega)}\\
&  =- (   \mathbb{P}(u_t\cdot \nabla \xi^{(k)}+  \nabla \xi^{(k)}\cdot  u_t),\pounds_{\xi^{(k)}}^T u_t) _{L^2(\Omega)}
\,.\end{align*}
Thus
\begin{align} \label{erg-noncons}
\begin{split}
\rmd \|u_t\|_{L^2(\Omega)}^2 
&= \frac{1}{2} \sum_k \left(   \mathbb{P}(u_t\cdot \nabla \xi^{(k)}+  \nabla \xi^{(k)}\cdot  u_t),\pounds_{\xi^{(k)}}^T u_t\right)_{L^2(\Omega)} \rmd t \\
&\qquad  - \nu   \sum_k  \left(   \pounds_{\eta^{(k)}}u_t,  \pounds_{\eta^{(k)}}^T u_t  \right)_{L^2(\Omega)}  \rmd t + \sum_k \left(u_t,\nabla \xi^{(k)} \cdot u_t\right)_{L^2(\Omega)}  \rmd W_t^{(k)}.
\end{split}
\end{align}
  Unlike equations \eqref{uEqn1} with  \eqref{Econseqnsf} and \eqref{EconseqnsSig} discussed in Remark \ref{remEnergy1}, the above computation shows that circulation-theorem preserving models do not, in general, satisfy a simple energy equality even when $\nu\equiv 0$ unless the $\xi^{(k)}$ are spatially constant.  Firstly, the energy in \eqref{erg-circ-cons} is a fluctuating quantity.  Moreover, even the average energy is neither increasing, nor decreasing, a priori.  Energy can be introduced or removed from the system by the action of spatial gradients of the noise correlates $\{ \xi^{(k)} \}$ on the solution.  However it is clear from \eqref{energyId} that if, for example, the $\eta^{(k)}$ are constant in space and span $\mathbb{R}^d$ (e.g. as in Remark \ref{Remlaplacian}) and if $\nu$ is taken to be sufficiently large, relative to the magnitude of $\xi^{(k)}$ and its spatial gradient,  then the system is dissipative on the average. That is, smooth solutions satisfy the inequality
\be
\mathbb{E} \frac{1}{2} \|u_t\|_{L^2(\Omega)}^2 \leq \frac{1}{2}\|u_0\|_{L^2(\Omega)}^2
\,,
\ee
where the expectation $\mathbb{E}$ denotes averaging over the Brownian motions $\{W_t^{(k)}\}_{k\in \mathbb{N}}$.
Thus, among the class of models  \eqref{uEqn1} satisfying \eqref{feqNS} and \eqref{sigeqNS} (i.e., among the choices for $\xi^{(k)}$), there are equations which have solutions possessing the Constantin-Iyer Kelvin theorem $\mathbf{P}$ almost surely and are, on the average, dissipative.  \smallskip
\end{rem}

\begin{rem}[Energetics of Dissipating Stochastic Fluids]\label{remEnergy3}
We describe one last class of models; those which dissipate energy pathwise and thus generalize \eqref{uEqn1} with  \eqref{Econseqnsf} and \eqref{EconseqnsSig} to the non-ideal setting.  Fixing solenoidal vector fields $\{\xi^{(k)}\}_{k\in \mathbb{N}}$ and $\{\eta^{(k)}\}_{k\in \mathbb{N}}$ and using the notation introduced for \eqref{stratencons}, they read
\be\label{stratenconsNs}
\rmd u_t + B(u_t\rmd t + \sum_k  \xi^{(k)}  \circ \rmd W_t^{(k)} , u_t)= \nu\sum_k \mathbb{P} \big({\eta}^{(k)} \cdot \nabla \mathbb{P} ({\eta}^{(k)} \cdot \nabla u_t)\big).
\ee
The form of the ``viscous term" is chosen as the piece of the double-(adjoint) Lie operator $\pounds_{\eta^{(k)}}^T( \pounds_{\eta^{(k)}}^T u_t)$ appearing in the  stochastic Navier-Stokes--Poincar\'{e} equations which ensures that this term cannot increase of energy. There are, of course, other choices for the dissipation operator.   In It\^{o} form, Eqn. \eqref{stratenconsNs} is  \eqref{uEqn1} with
 \begin{align}\label{EconseqnsNSf} 
 f_t&:= u_t \cdot \nabla u_t -   \sum_k \xi^{(k)}  \cdot \nabla \mathbb{P} ( \xi^{(k)}  \cdot \nabla   u_t) - \nu\sum_k {\eta}^{(k)} \cdot \nabla \mathbb{P} ({\eta}^{(k)} \cdot \nabla u_t), \\
   \sigma_t^{(k)}&:= \xi^{(k)}  \cdot \nabla  u_t.
   \label{EconseqnsNSSig}
\end{align}
Due to the properties of $B(w,v)$ discussed in Remark \ref{remEnergy1}, solutions of \eqref{uEqn1} with \eqref{EconseqnsNSf} and \eqref{EconseqnsNSSig} satisfy an pathwise energy balance 
\be
\frac{1}{2}\|u_t\|_{L^2(\Omega)}^2 = \frac{1}{2} \|u_0\|_{L^2(\Omega)}^2 - \nu \sum_k \int_0^t \|\mathbb{P}  \eta^{(k)} \cdot \nabla  u_s\|_{L^2(\Omega)}^2\rmd s 
\,, \quad    \mathbf{P} \ a.s.
\ee
Unsurprisingly, such fluids  do not possess a Constantin-Iyer Kelvin theorem, in general, unless the noise vector fields 
$\{ {\xi}^{(k)} \}_{k\in \mathbb{N}}$ and $\{ {\eta}^{(k)}\}_{k\in \mathbb{N}}$ are spatially constant.
\end{rem}

 \begin{rem}[Pathwise Stochastic Weber Formula] \label{stochWeber}
In the proof of Theorem  \ref{ThmKENS}, we show that solutions of the stochastic Navier-Stokes--Poincar\'{e} equations  \eqref{e1}--\eqref{e2}  satisfy a pathwise stochastic Weber formula:
\be\label{stochweber}
 u^\sharp_t(x)= \mathbb{E}\left[\mathbb{P} \Big[\nabla A_{t}(x)^T u_0(A_{t}(x))\Big]^\sharp\ \Big|\ \mathcal{F}_{t}^{\{W^{(k)}\}}\right], \quad   \mathbf{P} \ a.s.
\ee
in which  the expectation averages over the standard Brownian motions $\{B^{(k)}\}_{k\in \mathbb{N}}$. 
By Stokes theorem applied to  \eqref{CIKelvin2}, we find that the vorticity-flux through comoving areas is statistically frozen 
   \be
   \int\!\!\!  \int_{S} \omega_t \cdot \rmd S =    \mathbb{E}\left[ \int\!\!\!  \int_{A_{t}(S)} \omega_0 \cdot \rmd S\ \Bigg|\ \mathcal{F}_{t}^{\{W^{(k)}\}}\right], \quad    \mathbf{P} \ a.s.
   \ee
 \end{rem}

 \section{Proofs}

 \begin{proof}[Proof of Proposition \ref{propCirc}]

The proof follows from a direct computation.  
 First, we convert \eqref{flow2} to an equivalent  It\^o SDE governing the paths
   \be\label{itopaths}
 \rmd X_t(x) = \left(u_t+ \frac{1}{2}\sum_k\xi^{(k)}\cdot \nabla \xi^{(k)} \right)\Bigg|_{X_t(x)} \rmd t +\sum_k \xi^{(k)}(X_t(x))   \rmd W_t^{(k)}, \qquad X_0(x)=x.
 \ee 
  The new term appearing in \eqref{itopaths} is called the ``noise-induced drift".
  Now, for any rectifiable loop $\Gamma$, let $\Gamma(s): [0,1]\mapsto \Gamma$ be a parametrization.  Then the circulation around the loop $\Gamma$ can be represented as
 \be
 \oint_{X_t(\Gamma)} u_t\cdot \rmd \ell =  \int_0^1  \frac{d}{ds} X_t(\Gamma(s))  \cdot u_t(X_t(\Gamma(s))) \rmd s=  \int_0^1   \Gamma'(s) \cdot \nabla X_t(\Gamma(s)) \cdot  u_t(X_t(\Gamma(s)))  \rmd s.  
 \ee
Upon differentiating the circulation in this representation and applying the It\^{o} product rule, we have
 \be\label{itoprod}
 \rmd  \oint_{X_t(\Gamma)} u_t\cdot \rmd \ell  =   \int_0^1 \Gamma'(s)\cdot \Big(\nabla X_t\cdot \rmd u_t(X_t)   +\rmd\nabla X_t\cdot   u_t(X_t)    + \rmd \left[  \nabla X_t;u_t(X_t)\right]_t\Big)\Big|_{x=\Gamma(s)} \rmd s.
 \ee
 The flow $u_t$ is random, driven by the same noise as on the particle trajectories.  Therefore,  to compute the stochastic differential $  \rmd (u_t(X_t(x)))$, we  apply the { It\^o-Wentzell formula}. For details, see, e.g., Theorem 1.1. of \cite{K81} or Theorem 3.3.1 of \cite{K97}.  This calculation introduces the Wentzell correction, as 
  \begin{align*}
  \rmd (u_t(X_t(x))) &= (\rmd u_t +\rmd X_t\cdot \nabla u_t)\big|_{X_t(x)} + \frac{1}{2} \nabla \otimes \nabla u_t  : \rmd \left[ X_t, X_t \right]_t + \rmd \left[\nabla u_t; X_t\right]_t  \big|_{X_t(x)}\\
  &= \Big(\rmd u_t +\big( u_t\cdot \nabla u_t +\frac{1}{2}\sum_k(\xi^{(k)}\cdot \nabla) \xi^{(k)}\cdot \nabla u_t  + \frac{1}{2}\sum_k \xi^{(k)} \otimes \xi^{(k)} : \nabla \otimes \nabla u_t\big)\rmd t\Big)\Big|_{X_t(x)}\\
  &\qquad   +\sum_k (\xi^{(k)} \cdot \nabla) u_t \big|_{X_t(x)}  \rmd W_t^{(k)}  + \rmd \left[\nabla u_t; X_t\right]_t  \big|_{X_t(x)}.
      \end{align*}
  To compute the Wentzell correction ${\rmd \left[\nabla u_t; X_t\right]_t  \big|_{X_t(x)}}$ explicitly, we take the gradient of Eq.  \eqref{uEqn1}
   \begin{align}\label{gradientEq}
 \rmd \nabla u_t =- \nabla \mathbb{P} f_t \rmd t -\sum_{k}  \nabla\mathbb{P}  \, \sigma_t^{(k)}  \rmd W_t^{(k)}.
\end{align}
The martingale part of $  \rmd \nabla u_t  $ is $-\sum_{k}  \nabla\mathbb{P}   \sigma_t^{(k)}  \rmd W_t^{(k)}$. Consequently, the Wentzell correction is given by
\begin{align}
\rmd \left[\nabla u_t; X_t\right]_t  \big|_{X_t(x)}:=\rmd \left[\partial_i u_t,X_t^i\right]_t \big|_{X_t(x)}&=-\sum_{k}     (\xi^{(k)} \cdot \nabla) \mathbb{P} \sigma_t^{(k)} \big|_{X_t} \rmd t.
\end{align}
Putting this together, we obtain the full differential
   \begin{align}\nonumber
  \rmd (u_t(X_t(x)))   &= \Big(u_t\cdot \nabla u_t -\mathbb{P}f_t\Big)\Big|_{X_t(x)} \rmd t+\sum_k \Big((\xi^{(k)} \cdot \nabla) u_t -\mathbb{P}\sigma_t\Big) \big|_{X_t(x)}  \rmd W_t^{(k)} \\
  &\qquad + \sum_{k}\Big(\frac{1}{2}(\xi^{(k)}\cdot \nabla) \xi^{(k)}\cdot \nabla u_t  + \frac{1}{2}\xi^{(k)} \otimes \xi^{(k)} : \nabla \otimes \nabla u_t- { \xi^{(k)} \cdot \nabla \mathbb{P} \sigma_t^{(k)}}\Big)\Big|_{X_t(x)}\rmd t.\label{itoWentzellU}
  \end{align}
Next, the gradient of the stochastic flow is easily found to satisfy
  \begin{align}\label{gradflow}
 \rmd \nabla X_t(x) &= \nabla X_t(x)\cdot\left( \nabla u_t(X_t(x)) +\frac{1}{2} \sum_k\nabla( \xi^{(k)}\cdot \nabla \xi^{(k)})   \right)\rmd t 
 \\&\hspace{40mm} + \sum_k \nabla X_t(x) \cdot \nabla \xi^{(k)}(X_t(x))  \rmd W_t^{(k)}, \nonumber \\
  \nabla X_0(x) 
  &=\mathbb{I}. 
 \end{align}
In view of \eqref{itoWentzellU} and \eqref{gradflow},  the quadratic cross-variation between the Lagrangian velocity and deformation matrix is
  \be
  \rmd \left[ \nabla X_t;u_t(X_t) \right]_t= \nabla X_t\cdot \sum_k \nabla \xi^{(k)} \cdot\Big( (\xi^{(k)} \cdot \nabla) u_t-\mathbb{P}\sigma_t\Big)\big|_{X_t(x)}\rmd t .
  \ee
Finally, the remaining term in \eqref{itoprod} can be expressed using \eqref{gradflow} as follows
  \be
  \rmd\nabla X_t \cdot u_t(X_t)   =  \nabla X_t \cdot \left(\left( \nabla \left(\frac{1}{2} |u_t|^2 \right)+ \frac{1}{2} \sum_k\nabla( \xi^{(k)}\cdot \nabla \xi^{(k)})\cdot u_t   \right)\rmd t  +\sum_k  (\nabla \xi^{(k)} \cdot u_t) \rmd W_t^{(k)}\right)\Bigg|_{X_t(x)}.
  \ee
  Upon putting the various elements of this computation this together, we have
  \begin{align}\nonumber
 \rmd  &\oint_{X_t(\Gamma)} u_t\cdot \rmd \ell  =   \int_0^1\Gamma'(s)\cdot \nabla X_t(\Gamma(s))\cdot \left( u_t\cdot \nabla u_t + \nabla \left(\frac{1}{2} |u_t|^2\right) - \mathbb{P}f_t\right)\Big|_{X_t(\Gamma(s))}\rmd t\rmd s\\ \nonumber
  &\qquad+   \sum_{k} \int_0^1\Gamma'(s)\cdot \nabla X_t(\Gamma(s))\cdot   \Big( \frac{1}{2}(\xi^{(k)}\cdot \nabla) \xi^{(k)}\cdot \nabla u_t  + \frac{1}{2}\xi^{(k)} \otimes \xi^{(k)} : \nabla \otimes \nabla u_t\\ \nonumber
  &\hspace{40mm} +  \frac{1}{2} \nabla( \xi^{(k)}\cdot \nabla \xi^{(k)})\cdot u_t  +   \nabla \xi^{(k)} \cdot (\xi^{(k)} \cdot \nabla) u_t- \pounds_{\xi^{(k)}}^T  \mathbb{P} \sigma_t^{(k)} \Big)\Big|_{X_t(\Gamma(s))} \rmd s\\
  &\qquad +   \sum_{k} \int_0^1 \Gamma'(s)\cdot \nabla X_t(\Gamma(s))\cdot  \left(\xi^{(k)} \cdot \nabla u_t    +\nabla \xi^{(k)} \cdot u_t  - \mathbb{P}\sigma_t^{(k)}\right)\Big|_{X_t(\Gamma(s))} \rmd W_t^{(k)}  \rmd t \rmd s.\label{comput}
    \end{align}
    Recall from the computation \eqref{vanishingOnGrad} that $\pounds_{\xi}^T\mathbb{P} v= \pounds_{\xi}^T v + \nabla (\xi\cdot \nabla q),$ for any vector field $v$ and some scalar function ${q}$.  
Since gradients vanish upon integration over closed loops (and, consequently, the action of the Leray projector is trivial on loop integrals), we have that
    \begin{align}\nonumber
   \rmd  &\oint_{X_t(\Gamma)} u_t\cdot \rmd \ell =  \oint_{X_t(\Gamma)}  \left(\pounds_{u_t}^T   u_t- f_t\right) \rmd t  \cdot \rmd \ell \\ \nonumber
   &\qquad +  \sum_k \oint_{X_t(\Gamma)} \left(\frac{1}{2}(\xi^{(k)}\cdot \nabla) \xi^{(k)}\cdot \nabla u_t +  \frac{1}{2}\xi^{(k)} \otimes \xi^{(k)} : \nabla \otimes \nabla u_t +  \frac{1}{2} \nabla( \xi^{(k)}\cdot \nabla \xi^{(k)})\cdot u_t\right.\\ 
   &\qquad
   \left.
   +\   \nabla \xi^{(k)} \cdot (\xi^{(k)} \cdot \nabla) u_t-  \pounds_{\xi^{(k)}}^T \sigma_t^{(k)}\right)  \rmd t  \cdot \rmd \ell \
   + \sum_k   \oint_{X_t(\Gamma)} \left( \pounds_{\xi^{(k)}}^T u_t  -\sigma_t\right) \rmd W_t^{(k)}\cdot \rmd \ell. \label{nexttolastcirc}
  \end{align}
Now note that the double (adjoint) Lie derivative \eqref{e2} can be expanded as follows:
\begin{align}\nonumber
\pounds_{\xi}^T(  \pounds_{\xi}^T v) &= \pounds_{\xi}^T(  \xi^j \partial_j  v_i+ \partial_i \xi^j  v_j) \\ \nonumber
&= \xi^k\partial_k(  \xi^j \partial_j  v_i+ \partial_i \xi^j  v_j) + \partial_i \xi^k(  \xi^j \partial_j  v_k+ \partial_k \xi^j  v_j)\\ \nonumber
&= (\xi\cdot \nabla)\xi\cdot \nabla  v_i+  (\xi \otimes \xi): (\nabla \otimes \nabla) v\\ \nonumber
&\quad + \partial_i \xi^j  (\xi\cdot \nabla)v_j+ ((\xi\cdot \nabla)\partial_i \xi^j) v_j  + (\partial_i \xi^j)(\xi\cdot \nabla)  v_j+ (\partial_i \xi^k)\partial_k \xi^j  v_j\\   
&= (\xi\cdot \nabla)\xi\cdot \nabla  v+ (\xi \otimes \xi): (\nabla \otimes \nabla) v+2 \nabla  \xi\cdot   (\xi\cdot \nabla)v+ \nabla ((\xi\cdot \nabla) \xi) \cdot v.  \label{lieComps}
\end{align}
Upon substituting this simplification into \eqref{nexttolastcirc}, we finally obtain equation \eqref{circulations}.

We remark that, geometrically, the right hand side of Eqn. \eqref{lieComps} is the $L^2$ dual of the double Lie bracket $[\xi,[\xi,u]\,]$ of the vector field $\xi$ acting on $u$; that is,
\[
\big\langle \pounds_{\xi}^T(\pounds_{\xi}^T v),u \big\rangle 
= 
\big\langle  v, [\xi,[\xi,u]\,]  \big\rangle.
\]
Moreover, in three-dimensional Euclidean space, by using the second form given in Eqn. \eqref{e2} one can obtain the following alternative expression for the double (adjoint) Lie derivative involving cross-products and the curl operator:
\begin{align} \nonumber
\pounds_{\xi}^T(  \pounds_{\xi}^T v)  &= \pounds_{\xi}^T\big( - \xi \times \curl v + \nabla (\xi\cdot v)\big) 
\\ \nonumber
  &= \xi\times \curl(  \xi \times \curl v) - \nabla (\xi\cdot   ( \xi \times \curl v))  +  \pounds_{\xi}^T( \nabla (\xi\cdot v))\\ \nonumber
  &= \xi\times \curl(  \xi \times \curl v)  +  \pounds_{\xi}^T( \nabla (\xi\cdot v))\\
    &= \xi\times \curl(  \xi \times \curl v)+   \nabla (\xi\cdot \nabla (\xi\cdot v)), \label{DLieCross}
\end{align}
where we have used the identity $\xi\cdot   ( \xi \times \curl v) =\curl v \cdot   ( \xi \times \xi)=0  $ and the fact that $\pounds_{\xi}^T\nabla q= \nabla (\xi\cdot \nabla q)$ which was verified in Eqn. \eqref{vanishingOnGrad}.  Note that the final term in \eqref{DLieCross} is a total gradient and therefore vanishes upon integration over any closed, rectifiable loop $\Gamma$.
 \end{proof}

 \begin{proof}[Proof of Theorem \ref{ThmKE}]$\,$
  
\noindent  We proceed in the same spirit as in the proof of the equivalence of the usual Kelvin theorem to smooth solutions of deterministic Euler given in the Introduction.
\vspace{2mm}

\noindent \textbf{Direction 1: Stochastic Euler-Poincar\'{e} solutions have a pathwise Kelvin Theorem.}
In view of Proposition \ref{propCirc}, one direction is simple: by using equation \eqref{uEqn1} with $f_t$ and $\{\sigma_t^{(k)}\}_{k\in \mathbb{N}}$  defined by  \eqref{feq} and \eqref{sigeq} in Theorem \ref{ThmKE}, and applying Proposition  \ref{propCirc} to the unique smooth solution $u_t$ for given initial conditions $u_0$ (which always exists provided, at least, that $T$ is taken sufficiently small  \cite{CFH17,F18}, see Remark \ref{localExRem}), one has that realization-by-realization of the Brownian processes $\{W_t^{(k)}\}_{k\in\mathbb{N}}$ circulations are materially conserved \eqref{KelvinThm}.  
\vspace{0mm}

\noindent \textbf{Direction 2: Pathwise Kelvin Theorem for all loops implies $u_t$ is a stochastic  Euler-Poincar\'{e}  solution.}
 For the other direction, assume that the circulation is conserved along all material loops $\Gamma$.  Since $u_t$ and $\{\xi^{(k)}\}_{k\in \mathbb{N}}$ are assumed smooth, the map $x\mapsto X_t(x)$ is a $\mathcal{F}_t$--adapted diffeomorphism \cite{K97,LJW84}. Its spatial inverse $A_t= X_t^{-1}$ is  $\mathcal{F}_t$--adapted, pointwise in $x$.  See Remark \ref{remReg} for a precise, sufficient regularity assumption. First we establish the form of the noise in the SPDE.
 \vspace{2mm}

\noindent \textbf{Form of noise:}. From  \eqref{circulations}, the quadratic variation of the circulation (denoted for a process $\zeta_t$ by $[\zeta_t ]_t$) is
\be
 \bigg[  \oint_{X_t(\Gamma)} u_t\cdot \rmd \ell \bigg|_{t=0}^{t={T'}} \bigg]_{T'}
 =  \sum_k \int_0^{T'}\int_0^1  \left|\Gamma'(s) \cdot \nabla X_t(\Gamma(s)) 
 \cdot   \left( \pounds_{\xi^{(k)}}^T u_t - \sigma_t^{(k)} \right)\big|_{X_t(\Gamma(s))}\right|^2 \rmd t \rmd s\,,
\ee
for any $T'\in[0,T]$.
On the other hand, if the pathwise Kelvin theorem holds, then the left-hand-side must vanish.  By assumption, the function $f(t,s):= \Gamma'(s) \cdot \nabla X_t(\Gamma(s)) \cdot   \left( \pounds_{\xi^{(k)}}^T u_t - \sigma_t^{(k)} \right)\big|_{X_t(\Gamma(s))}$ is continuous on $[0,T]\times [0,1]$.  Thus, we conclude that  for all $(t,s)\in [0,T]\times [0,1]$,  
\be\label{vanishing1}
\Gamma'(s) \cdot \nabla X_t(\Gamma(s)) \cdot   \left( \pounds_{\xi^{(k)}}^T u_t - \sigma_t^{(k)}  \right)\big|_{X_t(\Gamma(s))}\rmd t=0, \qquad \forall\  k\in \mathbb{N}.
\ee
We now show that the matrix  $\nabla X_t$ in \eqref{vanishing1} is non-singular almost surely for all  $x\in \Omega$.  For this, we apply
 \begin{lemma}\label{jacobian}
Fix smooth vector fields $b_t: [0,T] \times \Omega \mapsto\mathbb{R}^d$ and  $\{\xi^{(k)}\}_{k\in \mathbb{N}}:\Omega\mapsto \mathbb{R}^d$.  Let $x\mapsto X_{s,t}(x)$  be the regular stochastic flow of diffeomorphisms \cite{K97} associated to the It\^{o} SDE
\be\label{flow_jac}
\rmd X_{t}(x) = b_t(X_{t}(x)) \rmd t +\sum_k \xi^{(k)}(X_{t}(x)) \rmd W_t^{(k)}, \qquad X_{0}(x)=x.
\ee
 Then the following formula for the Jacobian holds
\be\label{detform}
\det(\nabla X_t(x)) = \exp\left(\int_0^t \left(\nabla \cdot b_t  - \frac{1}{2} \sum_k (\nabla \xi^{(k)})^T:\nabla \xi^{(k)} \right)\Bigg|_{X_{s}(x)} \rmd s + \sum_k\int_0^t \nabla \cdot \xi^{(k)}\big|_{X_{s}(x)}\rmd W_s^{(k)}\right).
\ee
\end{lemma}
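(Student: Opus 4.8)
The plan is to reduce the statement to an application of It\^o's formula to the logarithm of the determinant of the Jacobian matrix $J_t := \nabla X_t(x)$, after first deriving the linear matrix SDE that $J_t$ obeys. Since $X_t$ is by hypothesis a regular stochastic flow of diffeomorphisms, the matrix $J_t$ is invertible for all $t\in[0,T]$, $\mathbf{P}$ a.s., which is exactly what is needed to legitimately pass to $\log\det J_t$. Throughout I adopt the convention $(J_t)_{ij}=\partial_j X_t^i$ for the Jacobian; since $\det J_t$ is unchanged under transposition, the final formula \eqref{detform} is insensitive to this choice.

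First I would differentiate the It\^o SDE \eqref{flow_jac} in the label variable $x$. Writing $(\nabla b_t)_{ip} := \partial_p b_t^i$ and $(\nabla\xi^{(k)})_{ip} := \partial_p \xi^{(k),i}$, the chain rule produces the linear It\^o equation
\[
\rmd J_t = (\nabla b_t)\big|_{X_t}\, J_t\,\rmd t + \sum_k (\nabla\xi^{(k)})\big|_{X_t}\, J_t\,\rmd W_t^{(k)},\qquad J_0=\mathbb{I},
\]
the It\^o counterpart of the Stratonovich computation \eqref{gradflow}; here the coefficient matrices are evaluated along the trajectory $X_t(x)$ and act on $J_t$ by left multiplication.

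Next I would apply It\^o's formula to the scalar $\log\det J_t$, using the standard derivatives $\partial(\log\det J)/\partial J_{ij} = (J^{-1})_{ji}$ and $\partial^2(\log\det J)/\partial J_{ij}\partial J_{kl} = -(J^{-1})_{jk}(J^{-1})_{li}$. Contracting the first-order drift and martingale increments against $(J^{-1})_{ji}$ collapses them, via the cyclic trace identity $\tr(MJJ^{-1})=\tr(M)$, to $\tr(\nabla b_t) = \nabla\cdot b_t$ and $\sum_k\tr(\nabla\xi^{(k)}) = \sum_k \nabla\cdot\xi^{(k)}$ respectively. The only substantive computation is the It\^o correction: since the $W^{(k)}$ are independent, only the self cross-variations of the martingale part survive, giving $\rmd[J_{ij},J_{kl}] = \sum_k (\nabla\xi^{(k)}J)_{ij}(\nabla\xi^{(k)}J)_{kl}\,\rmd t$; contracting this against $-\tfrac12(J^{-1})_{jk}(J^{-1})_{li}$ and cancelling the $J$ and $J^{-1}$ factors by cyclic invariance of the trace reduces it to $-\tfrac12\sum_k \tr\big((\nabla\xi^{(k)})^2\big)$. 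Finally I would invoke the elementary identity $\tr\big((\nabla\xi^{(k)})^2\big) = (\nabla\xi^{(k)})^T:\nabla\xi^{(k)}$ to match the stated form.

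Collecting these terms yields $\rmd\log\det J_t = \big(\nabla\cdot b_t - \tfrac12\sum_k(\nabla\xi^{(k)})^T:\nabla\xi^{(k)}\big)\big|_{X_t}\,\rmd t + \sum_k (\nabla\cdot\xi^{(k)})\big|_{X_t}\,\rmd W_t^{(k)}$; integrating from $t=0$ with $\det J_0 = 1$ and exponentiating then gives \eqref{detform}. I expect the one delicate point to be the index bookkeeping in the second-order It\^o correction, where the cancellation of the deformation matrix against its inverse must be carried out carefully; once the linear SDE for $J_t$ is in hand, everything else is a routine application of the trace identities and It\^o's formula.
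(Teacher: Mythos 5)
Your proposal is correct and follows essentially the same route as the paper's own proof: differentiate the It\^o SDE in the label variable to obtain the linear matrix SDE for the Jacobian, apply It\^o's formula to $\log\det$ (your index-notation derivatives $\partial(\log\det J)/\partial J_{ij}=(J^{-1})_{ji}$ and $\partial^2(\log\det J)/\partial J_{ij}\partial J_{kl}=-(J^{-1})_{jk}(J^{-1})_{li}$ are exactly the paper's Gateaux derivatives), collapse the drift, martingale, and It\^o-correction terms via cyclic trace identities, and integrate and exponentiate. The only differences are cosmetic (left versus right multiplication in the matrix SDE, which you correctly note is immaterial for the determinant), so there is nothing to add.
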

\begin{proof}
Recall the classic formula $\ln (\det A) = {\rm tr} (\ln A )$, for any invertible matrix $A$. The first and second order Gateaux derivative of $\ln(\det A)$ in direction $\phi$  and in $(\phi,\psi)$ resp. may then be computed to be 
 \be\label{gatDerv}
D\ln(\det A)[\phi] = \tr[\phi A^{-1}],\qquad  D^2\ln( \det A)(A)[\phi,\psi] =-\tr[\phi A^{-1} \psi A^{-1}].
 \ee
The proof will follow as a direct computation. First, by taking the gradient in the initial data of \eqref{flow_jac}  we have
   \begin{align}\label{gradflowjac}
 \rmd \nabla X_t(x) = \nabla X_t(x)\cdot \nabla b_t(X_t(x)) \rmd t + \sum_k \nabla X_t(x) \cdot \nabla \xi^{(k)}(X_t(x))  \rmd W_t^{(k)}, \quad \nabla X_0(x)=\mathbb{I}.
 \end{align}
 Next, applying It\^{o}'s formula and using equation \eqref{gradflowjac} and the formulae \eqref{gatDerv}, we compute
 \begin{align*} 
 \rmd \ln\det(\nabla X_t(x))  &=D\ln\det(\nabla X_t(x))[  \nabla X_t(x)\cdot \nabla b_t(X_t(x))]\rmd t \\
 &\quad + \sum_kD\ln\det(\nabla X_t(x))\left[  \nabla X_t(x) \cdot \nabla \xi^{(k)}(X_t(x)) \right]\rmd W_t^{(k)}\\
 &\quad + \frac{1}{2}  \sum_kD^2\ln\det(\nabla X_t(x))\left[ \nabla X_t(x) \cdot \nabla \xi^{(k)}(X_t(x)),  \nabla X_t(x) \cdot \nabla \xi^{(k)}(X_t(x)) \right]\rmd t\\
 &= \tr(\nabla b_t)(X_t(x))\rmd t + \sum_k \tr(\nabla \xi^{(k)})(X_t(x))\rmd W_t^{(k)} 
 - \frac{1}{2}\sum_k (\nabla \xi^{(k)})^T:  \nabla \xi^{(k)}\big|_{X_t(x)} \rmd t.
 \end{align*}
We integrate in time and evaluate $\ln\det(\nabla X_0(x))=0$, since $\det(\nabla X_0(x))=1$. This yields a fomula for $\ln\det(\nabla X_t(x))$; whereupon formula \eqref{detform} emerges, upon exponentiating the result.
\end{proof}
In view of \eqref{itopaths}, we apply Lemma \ref{jacobian} with $b_t= u_t+ \frac{1}{2}\sum_k\xi^{(k)}\cdot \nabla \xi^{(k)} $.  Note that 
$$
\nabla \cdot b_t=\nabla \cdot u_t+  \frac{1}{2} \sum_k \left((\nabla \xi^{(k)})^T:\nabla \xi^{(k)} + \sum_k\xi^{(k)}\cdot \nabla (\nabla \cdot \xi^{(k)})\right).
$$
Thus, for divergence-free vector fields $u_t$ and $\{\xi^{(k)}\}_{k\in \mathbb{N}}$, we find from \eqref{detform} that the Stratonovich stochastic flow \eqref{flow} is volume preserving,
$\det(\nabla X_t(x)) = 1$. Thus, the kernel of $\nabla X_t$ is trivial $\mathbf{P}$ almost surely pointwise in  $(t,x)\in [0,T]\times \Omega$. Now, for any point $x\in \Omega$, choose a collection of loops $\{\Gamma_i(s)\}_{i=1,\dots,d}$ such that  at $x=\Gamma_i(s_i)$ for some $s_i\in[0,1]$ and with linearly independent tangents $\{\Gamma_i'(s_i)\}_{i=1,\dots,d}$.  Since \eqref{vanishing1} holds for all such loops and the matrix $\nabla X_t(x)$ is non-singular, it follows that $\pounds_{\xi^{(k)}}^T u_t - \sigma_t^{(k)} =0$ at $X_t(x)$ for all $t\in [0,T]$, $\mathbf{P}$ almost surely.  The above argument can be applied to all $x\in \Omega$ by choosing the appropriate collection of loops $\{ \Gamma_i\}$ and we conclude,
\be
\sigma_t^{(k)}|_{X_t(x)}=  \pounds_{\xi^{(k)}}^T u_t|_{X_t(x)},  \quad  \forall\  k\in \mathbb{N}, \ \ (t,x)\in [0,T]\times\Omega, \quad \mathbf{P}\  a.s.
\ee
Finally, fix any $y\in \Omega$.  Then, for any $t\in [0,T]$ and $\mathbf{P}$ a.e. 
$\varpi$ (where $\varpi$ denotes sample space dependence), letting $x= A_t^\varpi(y)$ allows us to conclude that $\sigma_t^{(k)}=  \pounds_{\xi^{(k)}}^Tu_t$ for all $(t,y)\in  [0,T]\times \Omega$,  $\mathbf{P}$ almost surely.

\noindent \textbf{Form of drift.} Upon using the fact that $ \sigma_t^{(k)} = \pounds_{\xi^{(k)}}^T u_t$,  pointwise in spacetime $\mathbf{P}$ a.s., Prop. \ref{propCirc}, implies that
  \begin{align*}
  \int_0^{T'}\oint_{X_t(\Gamma)}  & \left( \pounds_{u_t}^T   u_t-\sum_k\frac{1}{2} \pounds_{\xi^{(k)}}^T( \pounds_{\xi^{(k)}}^T u_t)  -f_t\right) \cdot \rmd \ell \ \rmd t  = 0\,,
  \end{align*}
  for all rectifiable loops $\Gamma$ and all ${T'}\in[0,T]$.  Since it is continuous, the integrand in the time integral above must vanish identically for all $t\in [0,T]$. Now, let $\Gamma'$ be any rectifiable loop.  Then, for any fixed $t\in [0,T]$ and $\mathbf{P}$ a.e. $\varpi$, let $\Gamma=A_t^\varpi(\Gamma')$.   Thus, we deduce that for any loop $\Gamma'$ the following holds
  $$
\oint_{\Gamma'}  \left( \pounds_{u_t}^T   u_t-\sum_k\frac{1}{2} \pounds_{\xi^{(k)}}^T( \pounds_{\xi^{(k)}}^T u_t)  -f_t\right) \cdot \rmd \ell   = 0.
 $$
  Finally, we can conclude that there exists a scalar process $q_t$ (not necessarily of bounded variation) such that
  \be\nonumber
f_t  = \left( \pounds_{u_t}^T   u_t -\sum_k\frac{1}{2} \pounds_{\xi^{(k)}}^T( \pounds_{\xi^{(k)}}^T u_t) \right)+ \nabla  q_t \,.
  \ee
By the fact that the Leray--Hodge projector $\mathbb{P}$ vanishes on gradients, it follows that  Eqn. \eqref{uEqn1} is satisfied with $f_t$ given by the expression \eqref{feq}. 
 \end{proof}

  \begin{proof}[Proof of Theorem \ref{ThmKENS}] 
Our proof employs a different method than that of  our Theorem \ref{ThmKE}.  In particular, we establish equivalence to a stochastic Weber formula, 
\be\label{stochweber1}
 u_t(x)= \mathbb{E}\left[\mathbb{P} (\nabla A_{t}(x))^T u_0(A_{t}(x))\ \Big|\ \mathcal{F}_{t}^{\{W^{(k)}\}}\right], \quad   \mathbf{P} \ a.s.
\ee
where $A_{t}= X_{t}^{-1}$ is the back-to-labels map and $X_t$ solves \eqref{flowNS}.  Note that, together, equations \eqref{stochweber1} and  \eqref{flowNS} form a fixed point problem.  It should be possible to solve this problem (pathwise in $W^{(k)}$) by combining the methods of  \cite{F18} for the stochastic Euler--Poincar\'{e}  with those of  \cite{I06} which establish local existence of deterministic Navier-Stokes from the stochastic Weber formula.  We do not pursue this issue here. Instead, we simply assume that smooth solutions of  \eqref{flowNS}, \eqref{stochweber1} exist, at least for sufficiently small times $T>0$. 

Note that it is clear that for sufficiently smooth $u_t$, the stochastic Weber formula \eqref{stochweber1} is equivalent to its integrated form on loops - the Constantin-Iyer Kelvin theorem:
\begin{align*}
 \oint_\Gamma u_t\cdot \rmd \ell &= \mathbb{E}\left[ \oint_\Gamma  \mathbb{P} (\nabla A_{t}(x))^T u_0(A_{t}(x))\ \Big|\ \mathcal{F}_{t}^{\{W^{(k)}\}}\right] \cdot \rmd \ell = \mathbb{E}\left[ \oint_{ A_{t}(\Gamma)}    u_0 \cdot \rmd \ell \Big|\ \mathcal{F}_{t}^{\{W^{(k)}\}}\right] .
\end{align*}
Thus, equivalence to the Constantin-Iyer Kelvin theorem for smooth solutions will follow from the same fixed point problem and the stochastic Navier-Stokes--Poincar\'{e} equations 
 \eqref{uEqn1} with $f_t$ and $\{\sigma_t^{(k)}\}_{k\in \mathbb{N}}$  defined by  \eqref{feqNS} and \eqref{sigeqNS}.   We note that this strategy has also been used in \cite{E09} to prove the equivalence of certain non-ideal hydromagnetic models to their stochastic Alfv\'{e}n theorems. 
 \vspace{2mm}

\noindent \textbf{Direction 1: Solution of the fixed-point problem \eqref{flowNS}, \eqref{stochweber1} solves Eq.  \eqref{uEqn1}.}
We first prove that a solution of the fixed point problem \eqref{stochweber1} provides a representation for a solution of  Eqn. \eqref{uEqn1} with  \eqref{feqNS} and \eqref{sigeqNS}.  We begin by using \eqref{stochweber1} to show that for any solenoidal vector field $v$ for all $0\leq s\leq t\leq T$ we have 
\begin{align}\nonumber
\langle u_t, v\rangle_{L^2} &= \mathbb{E}\left[ \langle  (\nabla A_{t})^T u_0(A_{t}), v \rangle_{L^2}\  \Big| \ \mathcal{F}_{t}^{\{W^{(k)}\}}\right]= \mathbb{E}\left[ \langle   u_0(A_{t}), (\nabla A_{t}) v \rangle_{L^2}\  \Big| \ \mathcal{F}_{t}^{\{W^{(k)}\}}\right]\\
&= \mathbb{E}\left[ \langle   u_0, (\nabla A_{t})(X_t) v(X_{t}) \rangle_{L^2}\  \Big| \ \mathcal{F}_{t}^{\{W^{(k)}\}}\right]= \mathbb{E}\left[ \langle   u_0, (A_{t})^* v \rangle_{L^2}\  \Big| \ \mathcal{F}_{t}^{\{W^{(k)}\}}\right]\label{newpaireqn}
\end{align}
where we have recalled  that $(\nabla A_{t})(X_{t}) v(X_{t}):= (A_{t})^* v= (X_{t}^{-1})^* v$ is the pull-back of $v$ by the flow $X_{t}$.   Now, by Kunita's formula \cite{K97}, we have for flows $X_{t}$ generated by the SDE  \eqref{flowNS} that 
\begin{align}\nonumber
(A_{t})^* v &= v + \sum_k \int_0^t (A_{s})^*   \pounds_{\xi^{(k)}} v\  \rmd W_s^{(k)} + \sqrt{2\nu} \sum_k \int_0^t (A_{s})^*   \pounds_{\eta^{(k)}} v \ \rmd B_s^{(k)} \\
&\quad + \int_0^t \left[ (A_{s})^*  \pounds_{u_s} v + \frac{1}{2} \sum_k  (A_{s})^*   \pounds_{\xi^{(k)}}( \pounds_{\xi^{(k)}} v) + \nu \sum_k  (A_{s})^*   \pounds_{\eta^{(k)}}( \pounds_{\eta^{(k)}} v) \right] \rmd s.\label{KunflowEqn}
\end{align}
In the interest of being self-contained, we prove the identity \eqref{KunflowEqn} in a slightly different but equivalent form in Lemma \ref{wLem} below.  Substituting \eqref{KunflowEqn} into \eqref{newpaireqn} and recalling that $ \sqrt{2\nu} \sum_k \int_0^t (A_{s})_*   \pounds_{\eta^{(k)}} v \ \rmd B_s^{(k)}$ is a martingale, conditioned on the history of the process $W_t^{(k)} $, we have
\begin{align}\nonumber
\langle u_t, v\rangle_{L^2} &= \langle u_0, v\rangle_{L^2}  + \sum_k\mathbb{E}\left[ \int_0^t  \langle u_0, (A_{s})^*   \pounds_{\xi^{(k)}} v\rangle_{L^2(\Omega)}\  \rmd W_s^{(k)} \  \Big| \ \mathcal{F}_{t}^{\{W^{(k)}\}}\right]
 \\ \nonumber
&\quad + \int_0^t \mathbb{E}\left[  \langle u_0, (A_{s})^*  \pounds_{u_s} v\rangle_{L^2(\Omega)} + \frac{1}{2} \sum_k   \langle u_s, (A_{s})^*   \pounds_{\xi^{(k)}}( \pounds_{\xi^{(k)}} v)\rangle_{L^2(\Omega)}\right.\\
&\qquad\qquad\qquad\qquad\qquad  \left.+ \nu \sum_k   \langle u_0, (A_{s})^*   \pounds_{\eta^{(k)}}( \pounds_{\eta^{(k)}} v)\rangle_{L^2(\Omega)}\  \Big| \ \mathcal{F}_{t}^{\{W^{(k)}\}}\right]  \rmd s.
\end{align}
Upon using the equivalence \eqref{newpaireqn}, which holds for any divergence-free vector field (a property which is satisfied by all of $\pounds_{u_t} v$, $\pounds_{\xi^{(k)}} v$,  $\pounds_{\xi^{(k)}} (\pounds_{\xi^{(k)}} v)$,  and $\pounds_{\eta^{(k)}} (\pounds_{\eta^{(k)}} v)$ since $\xi^{(k)}$ and $\eta^{(k)}$ are assumed solenoidal), we see that
\begin{align}\nonumber
\langle u_t, v\rangle_{L^2} &= \langle u_0, v\rangle_{L^2}  + \sum_k \int_0^t  \langle u_s,   \pounds_{\xi^{(k)}} v\rangle_{L^2(\Omega)}\  \rmd W_s^{(k)}
 \\ 
&\quad + \int_0^t \left[  \langle u_s,  \pounds_{u_s} v\rangle_{L^2(\Omega)} + \frac{1}{2} \sum_k   \langle u_s,   \pounds_{\xi^{(k)}}( \pounds_{\xi^{(k)}} v)\rangle_{L^2(\Omega)}+ \nu \sum_k   \langle u_s,   \pounds_{\eta^{(k)}}( \pounds_{\eta^{(k)}} v)\rangle_{L^2(\Omega)} \right]  \rmd s.
\end{align}
The resulting equation corresponds exactly with the definition of the weak form \eqref{weakform}, thereby establishing that \eqref{stochweber1} is the solution in the sense of Definition 3 of \cite{CFH17}. 
\vspace{2mm}

\noindent \textbf{Direction 2: Smooth solutions Eq.  \eqref{uEqn1} satisfy the fixed-point problem \eqref{flowNS}, \eqref{stochweber1}.}
Given a smooth solution $u_t$, we may construct a smooth flow $X_t$ solving \eqref{stochweber1}, as well as its back-to-labels map $A_t$ which solves 
 \begin{align} \label{backtolabelsEq}
\rmd_t A_t(x) &+ u_t(x)\cdot \nabla A_t(x) \rmd t +\sum_k \xi^{(k)} \cdot \nabla A_t(x)\circ \rmd W_t^{(k)}+\sqrt{2\nu}\sum_k \eta^{(k)} \cdot \nabla A_t(x)\circ \rmd B_t^{(k)}=0,
 \end{align}
 with data $A_t(x)|_{t=0}= x$.
 This equation is easily established by applying the It\^{o} formula to $A_t\circ X_t={\rm id}$.  
The spatial gradient of the back-to-labels map is then found to solve
  \begin{align}\label{GradbacktolabelsEq}
\rmd_t \nabla A_t(x) &+\pounds_{u_t}^T    \nabla A_t(x) \rmd t +\sum_k  \pounds_{\xi^{(k)}}^T \nabla A_t(x)\circ \rmd W_t^{(k)}+\sqrt{2\nu}\sum_k  \pounds_{\eta^{(k)}}^T  \nabla A_t(x)\circ \rmd B_t^{(k)}=0,
 \end{align}
 with data $\nabla A_t(x)|_{t=0}= \mathbb{I}.$
 Define now $\ol{u}_t:=\ol{u}_t(x)$ from $u_0$, $A_t$ and $\nabla A_t$ by
\be\label{constructbaru}
\ol{u}_t(x) = \mathbb{E}\left[\mathbb{P} (\nabla A_{t}(x))^T u_0(A_{t}(x))\ \Big|\ \mathcal{F}_{t}^{\{W^{(k)}\}}\right], \quad   \mathbf{P} \ a.s.
\ee
We aim to show that $\ol{u}_t$ is a solution to the fixed point problem \eqref{flowNS}, \eqref{stochweber1}.
 To do so, we derive now a stochastic evolution equation for $\ol{u}_t(x)$.  This will require the following two Lemmas
 
 \begin{lemma}\label{thetaLemma}
Let $v\in C(0,T;C^2(\Omega))$ be deterministic.  Then, the process $\theta_t:=v_t\circ A_t$ solves the SPDE
\begin{align}\nonumber
\rmd_t \theta_t   &= \left(\partial_t v|_{A_t}- u_t\cdot  \nabla \theta_t+\frac{1}{2}\sum_k (\xi^{(k)}\cdot \nabla )((\xi^{(k)}\cdot \nabla )\theta_t )+ \nu \sum_k (\eta^{(k)}\cdot \nabla )((\eta^{(k)}\cdot \nabla )\theta_t ) \right)\rmd t \\ \label{thetaEqn}
&\quad -   \sum_k \xi^{(k)} \cdot \nabla \theta_t  \rmd W_t^{(k)} - \sqrt{2\nu}  \sum_k \eta^{(k)} \cdot \nabla \theta_t  \rmd B_t^{(k)}. 
\end{align}
 \end{lemma}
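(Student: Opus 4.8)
The plan is to derive \eqref{thetaEqn} in two moves: first differentiate the composition $\theta_t = v_t\circ A_t$ in \emph{Stratonovich} form, using the transport SPDE \eqref{backtolabelsEq} for the back-to-labels map $A_t$, and then convert the resulting Stratonovich equation into the It\^o form displayed in the statement. Since Stratonovich calculus obeys the ordinary chain rule, I would write
\begin{equation*}
\rmd_t\theta_t = (\partial_t v)|_{A_t}\,\rmd t + \sum_i (\partial_{y_i} v)|_{A_t}\circ\rmd_t A_t^i,
\end{equation*}
where the first term captures the explicit time-dependence of $v$ and the second the motion of the label $A_t$. Substituting the Stratonovich differential of $A_t^i$ read off from \eqref{backtolabelsEq} is then purely mechanical, and the regularity recorded in Remark \ref{remReg} together with $v\in C(0,T;C^2(\Omega))$ is enough to justify the manipulations below, in particular commuting $\nabla$ with $\rmd_t$.

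The key algebraic step is the chain-rule identity $(\nabla v)|_{A_t}\cdot(w\cdot\nabla A_t) = w\cdot\nabla\theta_t$, valid for every vector field $w$, which follows at once from $\partial_{x_j}\theta_t = (\partial_{y_i}v)|_{A_t}\,\partial_{x_j}A_t^i$ upon contracting with $w^j$. Applying it with $w = u_t$, $w = \xi^{(k)}$ and $w = \eta^{(k)}$ collapses each composed term produced by the substitution into $w\cdot\nabla\theta_t$, giving the Stratonovich evolution
\begin{equation*}
\rmd_t\theta_t = \left[(\partial_t v)|_{A_t} - u_t\cdot\nabla\theta_t\right]\rmd t - \sum_k\xi^{(k)}\cdot\nabla\theta_t\circ\rmd W_t^{(k)} - \sqrt{2\nu}\sum_k\eta^{(k)}\cdot\nabla\theta_t\circ\rmd B_t^{(k)},
\end{equation*}
whose drift and martingale coefficients already have the shape required by \eqref{thetaEqn}.

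It remains to pass to It\^o form via $\int_0^t g_s\circ\rmd W_s^{(k)} = \int_0^t g_s\,\rmd W_s^{(k)} + \tfrac12[g, W^{(k)}]_t$ (and analogously for $B^{(k)}$). The step I expect to demand the most care is computing these cross-variations, because the noise coefficient $g = -\xi^{(k)}\cdot\nabla\theta_t$ itself involves $\nabla\theta_t$: one must take the spatial gradient of the Stratonovich equation above to identify the $\rmd W_t^{(k)}$-martingale part of $\rmd_t(\nabla\theta_t)$ as $-\nabla(\xi^{(k)}\cdot\nabla\theta_t)$, whence the $\rmd W_t^{(k)}$-coefficient of $\rmd_t g$ is $\xi^{(k)}\cdot\nabla(\xi^{(k)}\cdot\nabla\theta_t)$ and $\tfrac12\,\rmd[g, W^{(k)}]_t = \tfrac12\,\xi^{(k)}\cdot\nabla(\xi^{(k)}\cdot\nabla\theta_t)\,\rmd t$. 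The identical computation for the $B^{(k)}$-noise carries an extra factor $(\sqrt{2\nu})^2 = 2\nu$, producing $\nu\,\eta^{(k)}\cdot\nabla(\eta^{(k)}\cdot\nabla\theta_t)\,\rmd t$. Since the families $\{W^{(k)}\}_{k\in\mathbb{N}}$ and $\{B^{(k)}\}_{k\in\mathbb{N}}$ are mutually independent there are no cross terms between the two noises, so summing these corrections over $k$ and adding them to the Stratonovich drift reproduces precisely the second-order terms $\tfrac12\sum_k(\xi^{(k)}\cdot\nabla)((\xi^{(k)}\cdot\nabla)\theta_t)$ and $\nu\sum_k(\eta^{(k)}\cdot\nabla)((\eta^{(k)}\cdot\nabla)\theta_t)$ of \eqref{thetaEqn}, while the martingale parts are left unchanged. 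This completes the plan.
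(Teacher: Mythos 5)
Your proof is correct, but it organizes the computation in the opposite order from the paper's. The paper works It\^{o}-first: it converts the transport equation \eqref{backtolabelsEq} for $A_t$ into It\^{o} form \eqref{backtolabelsEqIto}, applies the It\^{o} formula to the composition $\theta_t=v_t\circ A_t$ (producing the Hessian term $\tfrac12\rmd\left[A_t,A_t\right]_t:(\nabla\otimes\nabla v_t)|_{A_t}$), computes $\rmd\left[A_t,A_t\right]_t$ explicitly, and then collapses everything via the second-order chain-rule identity
\begin{equation*}
(\xi\cdot\nabla)A_t\cdot(\nabla\otimes\nabla v_t)|_{A_t}\cdot(\xi\cdot\nabla)A_t+(\xi\cdot\nabla)((\xi\cdot\nabla)A_t)\cdot\nabla v_t|_{A_t}=(\xi\cdot\nabla)((\xi\cdot\nabla)\theta_t).
\end{equation*}
You compose first, in Stratonovich form, so only the first-order chain rule and the identity $(\nabla v)|_{A_t}\cdot(w\cdot\nabla A_t)=w\cdot\nabla\theta_t$ are needed; all the second-order structure is deferred to the Stratonovich-to-It\^{o} conversion at the level of the $\theta_t$ equation, where the corrections come from the cross-variations $[\xi^{(k)}\cdot\nabla\theta,W^{(k)}]_t$ and $[\sqrt{2\nu}\,\eta^{(k)}\cdot\nabla\theta,B^{(k)}]_t$, computed by the self-referential device of differentiating the derived equation in space --- the same trick the paper itself uses for the Wentzell correction in the proof of Proposition \ref{propCirc} and for the It\^{o} form \eqref{e1Ito} of \eqref{e1}. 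Your sign bookkeeping, the factor $(\sqrt{2\nu})^2=2\nu$, and the observation that independence of the two Brownian families (and of the different $k$'s) kills all cross terms are correct, so you land exactly on \eqref{thetaEqn}. As for what each route buys: yours never touches the Hessian of $v$ or the quadratic variation of $A_t$, which is cleaner; the paper's route has the virtue that every step is an application of It\^{o}'s formula to a $C^{1,2}$ function of a semimartingale, hence well-defined exactly under the stated hypothesis $v\in C(0,T;C^2(\Omega))$, whereas extracting the martingale part of $\nabla\theta_t$ by differentiating the $\theta_t$ equation tacitly assumes that $\nabla\theta_t$ is itself a semimartingale solving the differentiated equation --- justifiable from the flow regularity of Remark \ref{remReg} together with an approximation argument (or one extra derivative on $v$, harmless in the application where $v=u_0$ is smooth), but it is the one step of your argument that consumes slightly more than the lemma's hypotheses literally provide.
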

 \begin{proof}
 First, the  It\^{o} form of Eq. \eqref{backtolabelsEq} reads
  \begin{align} \nonumber
\rmd_t A_t(x) &+ u_t(x)\cdot \nabla A_t(x) \rmd t - \frac{1}{2}\sum_k(\xi^{(k)} \cdot \nabla)( (\xi^{(k)} \cdot \nabla) A_t(x)) \rmd t - \nu \sum_k(\eta^{(k)} \cdot \nabla)( (\eta^{(k)} \cdot \nabla) A_t(x)) \rmd t \\
&\qquad +\sum_k \xi^{(k)} \cdot \nabla A_t(x) \rmd W_t^{(k)}+\sqrt{2\nu}\sum_k \eta^{(k)} \cdot \nabla A_t(x) \rmd B_t^{(k)}=0. \label{backtolabelsEqIto}
 \end{align}
 Now, applying the It\^{o} product formula, we have
 \begin{align*}
  \rmd \theta_t &= \partial_t v|_{A_t} \rmd t + \rmd A_t \cdot \nabla v_t|_{A_t} + \frac{1}{2} \rmd \left[ A_t, A_t\right]_t: (\nabla \otimes \nabla v_t)|_{A_t}\\
  &=\partial_t v|_{A_t} \rmd t -  (u_t\cdot \nabla A_t)\cdot \nabla v_t|_{A_t}\rmd t -  \sum_k (\xi^{(k)} \cdot \nabla A_t)\cdot \nabla v_t|_{A_t} \rmd W_t^{(k)}\\
  &\quad - \sqrt{2\nu} \sum_k (\eta^{(k)} \cdot \nabla A_t)\cdot \nabla v_t|_{A_t} \rmd B_t^{(k)}  + \frac{1}{2} \rmd \left[ A_t, A_t\right]_t: (\nabla \otimes \nabla v_t)|_{A_t}\\
 &\quad + \frac{1}{2}\sum_k(\xi^{(k)} \cdot \nabla)( (\xi^{(k)} \cdot \nabla) A_t(x))\cdot \nabla v_t|_{A_t} \rmd t + \nu \sum_k(\eta^{(k)} \cdot \nabla)( (\eta^{(k)} \cdot \nabla) A_t(x))\cdot \nabla v_t|_{A_t} \rmd t.
  \end{align*}
  Using \eqref{backtolabelsEqIto}, we compute the quadratic variation term as 
  \be\label{quadvarA}
\frac{1}{2} \rmd  \left[ A_t, A_t\right]_t=  \frac{1}{2}\sum_k (\xi^{(k)} \cdot \nabla) A_t \otimes  (\xi^{(k)} \cdot \nabla) A_t \rmd t+ \nu \sum_k(\eta^{(k)} \cdot \nabla) A_t \otimes  (\eta^{(k)} \cdot \nabla) A_t \rmd t.
  \ee
  Thus, putting \eqref{quadvarA} together with our calculation of $\rmd \theta_t$, we arrive at the following equation
   \begin{align*}
  \rmd \theta_t   &=\partial_t v_t|_{A_t} \rmd t -  (u_t\cdot \nabla A_t)\cdot \nabla v_t|_{A_t} \rmd t-  \sum_k (\xi^{(k)} \cdot \nabla A_t)\cdot \nabla v_t|_{A_t} \rmd W_t^{(k)}\\
  &\quad - \sqrt{2\nu} \sum_k (\eta^{(k)} \cdot \nabla A_t)\cdot \nabla v_t|_{A_t} \rmd B_t^{(k)} +  \frac{1}{2} \sum_k(\xi^{(k)} \cdot \nabla) A_t \cdot  (\nabla \otimes \nabla v_t)|_{A_t}\cdot  (\xi^{(k)} \cdot \nabla) A_t \rmd t\\
  &\quad  +  \nu \sum_k(\eta^{(k)} \cdot \nabla) A_t \cdot  (\nabla \otimes \nabla v_t)|_{A_t}\cdot  (\eta^{(k)} \cdot \nabla) A_t \rmd t\\
   &\quad + \frac{1}{2}\sum_k(\xi^{(k)} \cdot \nabla)( (\xi^{(k)} \cdot \nabla) A_t(x))\cdot \nabla v_t|_{A_t} \rmd t + \nu \sum_k(\eta^{(k)} \cdot \nabla)( (\eta^{(k)} \cdot \nabla) A_t(x))\cdot \nabla v_t|_{A_t} \rmd t.
  \end{align*}
Using finally the chain rule via both $(u\cdot \nabla A_t)\cdot \nabla v_t= u\cdot \nabla \theta_t$ and the identity
   \begin{align*}
  (\xi \cdot \nabla) A_t \cdot  (\nabla \otimes \nabla v_t)|_{A_t}\cdot  (\xi \cdot \nabla) A_t  + (\xi \cdot \nabla)( (\xi \cdot \nabla) A_t(x)) \cdot \nabla v_t|_{A_t}&=  (\xi\cdot \nabla )((\xi\cdot \nabla )\theta_t ),
  \end{align*}
  we deduce the stated evolution equation \eqref{thetaEqn}.
 \end{proof}

 We now  derive the evolution of the ``Weber velocity" $w_t$, generalizing Thm. 2.2 of \cite{CI08} to multiplicative noise.  It can also be derived as an application of Kunita's formula \eqref{KunflowEqn} above, but we prove it here directly.
 \begin{lemma}\label{wLem}
 Let $v\in C(0,T;C^2(\Omega))$ and  $\theta_t:=v_t\circ A_t$.  The process $w_t:=  (\nabla A_{t}(x))^T\theta_t$ solves the SPDE
 \begin{align}\nonumber
\rmd_t w_t   &+   \left(   \pounds_{u_t}^T w_t- \frac{1}{2}  \sum_k \pounds_{\xi^{(k)}}^T( \pounds_{\xi^{(k)}}^Tw_t)- \nu  \sum_k \pounds_{\eta^{(k)}}^T( \pounds_{\eta^{(k)}}^T w_t)\right)  \rmd t\\
&\qquad\qquad\qquad\qquad\qquad  + \sum_{k}    \pounds_{\xi^{(k)}}^T w_t   \rmd W_t^{(k)} +  \sqrt{2\nu} \sum_{k}    \pounds_{\eta^{(k)}}^T w_t   \rmd B_t^{(k)} =0.\label{wevol}
\end{align}
 \end{lemma}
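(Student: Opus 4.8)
The plan is to compute $\rmd_t w_t$ by the Leibniz rule, working throughout in Stratonovich form so that the ordinary product and chain rules apply, and to convert to It\^{o} form only at the very end. First I would record the two factors. For the gradient of the back-to-labels map, equation \eqref{GradbacktolabelsEq} already furnishes the Stratonovich dynamics of $\nabla A_t$, and transposing it gives
\[
\rmd_t (\nabla A_t)^T = -(\pounds_{u_t}^T\nabla A_t)^T\,\rmd t - \sum_k (\pounds_{\xi^{(k)}}^T\nabla A_t)^T\circ\rmd W_t^{(k)} - \sqrt{2\nu}\sum_k(\pounds_{\eta^{(k)}}^T\nabla A_t)^T\circ\rmd B_t^{(k)}.
\]
For the composition factor I would obtain the Stratonovich form of $\theta_t=v\circ A_t$ either by rewriting \eqref{thetaEqn} of Lemma \ref{thetaLemma} in Stratonovich variables, or more directly by the Stratonovich chain rule applied to $v\circ A_t$ using \eqref{backtolabelsEq}; since Stratonovich calculus obeys the ordinary chain rule one has $\nabla v|_{A_t}\cdot(b\cdot\nabla A_t)=b\cdot\nabla\theta_t$ for any vector field $b$, whence
\[
\rmd_t\theta_t = -u_t\cdot\nabla\theta_t\,\rmd t - \sum_k\xi^{(k)}\cdot\nabla\theta_t\circ\rmd W_t^{(k)} - \sqrt{2\nu}\sum_k\eta^{(k)}\cdot\nabla\theta_t\circ\rmd B_t^{(k)}.
\]
(I take $v$ time-independent, as in the application $v=u_0$; a genuinely time-dependent $v$ merely contributes the extra pullback term $(\nabla A_t)^T\partial_t v|_{A_t}\,\rmd t$.)

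The heart of the argument is an algebraic identity showing that the adjoint-Lie structure is reproduced by the product. For any smooth vector field $a$ I would verify, in index notation, that
\[
(\pounds_a^T\nabla A_t)^T\theta_t + (\nabla A_t)^T(a\cdot\nabla\theta_t) = \pounds_a^T\big((\nabla A_t)^T\theta_t\big) = \pounds_a^T w_t.
\]
Writing $(w_t)_i=(\partial_i A_t^\alpha)\theta_t^\alpha$ and expanding both sides, the three terms $a^j\partial_j\partial_i A_t^\alpha\,\theta^\alpha$, $(\partial_i a^j)(\partial_j A_t^\alpha)\theta^\alpha$ and $(\partial_i A_t^\alpha)\,a^j\partial_j\theta^\alpha$ match term by term; this is the crux, since it is what guarantees that no stray non-Lie pieces survive.

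Granting the identity, the Stratonovich Leibniz rule $\rmd_t w_t = (\rmd_t(\nabla A_t)^T)\theta_t + (\nabla A_t)^T\rmd_t\theta_t$ collects the drift, the $W^{(k)}$-channel and the $B^{(k)}$-channel each into a single $\pounds^T$ acting on $w_t$ (with $a=u_t$, $\xi^{(k)}$, $\eta^{(k)}$ respectively), yielding the clean Stratonovich transport equation
\[
\rmd_t w_t + \pounds_{u_t}^T w_t\,\rmd t + \sum_k\pounds_{\xi^{(k)}}^T w_t\circ\rmd W_t^{(k)} + \sqrt{2\nu}\sum_k\pounds_{\eta^{(k)}}^T w_t\circ\rmd B_t^{(k)} = 0.
\]
Finally I would convert this to It\^{o} form: each diffusion coefficient is linear in $w_t$ (namely $-\pounds_{\xi^{(k)}}^T w_t$ and $-\sqrt{2\nu}\,\pounds_{\eta^{(k)}}^T w_t$), so the Stratonovich-to-It\^{o} correction is $\tfrac12\sum_k\pounds_{\xi^{(k)}}^T(\pounds_{\xi^{(k)}}^T w_t)\,\rmd t + \nu\sum_k\pounds_{\eta^{(k)}}^T(\pounds_{\eta^{(k)}}^T w_t)\,\rmd t$, the factor $\nu$ in the $B$-channel arising from squaring $\sqrt{2\nu}$ against the $\tfrac12$. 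Adding this to the drift reproduces precisely the double-(adjoint)-Lie terms of \eqref{wevol}.

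I expect the main care to lie in this last conversion together with the preceding identity: the identity ensures that the product re-expresses each noise channel exactly as $\pounds^T w_t$, and one must track the cross-variation bookkeeping so the $\xi$- and $\eta$-channels emerge with the coefficients $\tfrac12$ and $\nu$. Should one prefer to argue directly in It\^{o} form, the analogous computation instead requires the explicit cross-variation $\rmd[(\nabla A_t)^T,\theta_t]_t$ between the two factors driven by the same noise, which is exactly the contribution that the Stratonovich route packages automatically.
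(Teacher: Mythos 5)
Your proposal is correct, but it follows a genuinely different route from the paper's own proof. The paper works in It\^{o} variables throughout: it converts \eqref{GradbacktolabelsEq} and (via Lemma \ref{thetaLemma}) \eqref{backtolabelsEq} into their It\^{o} forms, applies the It\^{o} product rule $\rmd_t w_t = \rmd(\nabla A_t)^T\,\theta_t + (\nabla A_t)^T\rmd\theta_t + \rmd\left[(\nabla A_t)^T,\theta_t\right]_t$, computes the cross-variation explicitly, and then regroups everything using the same transport identity you isolate together with the expansion \eqref{lieComps} of the double adjoint-Lie derivative --- the step the paper itself calls ``straightforward but tedious.'' You instead remain in Stratonovich variables, where the Leibniz and chain rules hold without correction terms, so that the coordinate identity $(\pounds_a^T\nabla A_t)^T\theta_t + (\nabla A_t)^T(a\cdot\nabla\theta_t) = \pounds_a^T w_t$ (precisely the displayed identity in the paper's proof) collapses each of the drift, $W^{(k)}$ and $B^{(k)}$ channels into a single $\pounds^T w_t$, yielding a clean Stratonovich transport equation; one standard Stratonovich-to-It\^{o} conversion, using linearity of the noise coefficients in $w_t$, then produces the $\tfrac12\,\pounds_{\xi^{(k)}}^T(\pounds_{\xi^{(k)}}^T w_t)$ and $\nu\,\pounds_{\eta^{(k)}}^T(\pounds_{\eta^{(k)}}^T w_t)$ corrections with the correct signs and recovers \eqref{wevol}. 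What each approach buys: yours trades the explicit cross-variation bookkeeping and the expansion \eqref{lieComps} for a single conversion at the end, and it makes transparent that \eqref{wevol} is nothing but the It\^{o} form of pathwise Lie transport of the $1$-form $w_t$ along the stochastic flow, in the spirit of Kunita's formula \eqref{KunflowEqn} which the paper mentions as an alternative derivation; the paper's It\^{o}-first route keeps every object a genuine It\^{o} integrand from the start, which is the form ultimately needed when pairing against test fields in the weak formulation and is somewhat less demanding about regularity. One further point in your favor: you observe that a genuinely time-dependent $v_t$ contributes an extra term $(\nabla A_t)^T\partial_t v|_{A_t}\,\rmd t$, which is absent from \eqref{wevol} as stated; this correctly identifies an implicit assumption in the lemma (in the application $v=u_0$ is time-independent) that the paper's own statement glosses over.
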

 \begin{proof}
 First, the   It\^{o} form  of Eqn. \eqref{GradbacktolabelsEq} reads
   \begin{align}\nonumber
\rmd_t \nabla A_t(x) &+\pounds_{u_t}^T    \nabla A_t(x) \rmd t - \frac{1}{2}\sum_k  \pounds_{\xi^{(k)}}^T ( \pounds_{\xi^{(k)}}^T \nabla A_t(x)) \rmd t- \nu\sum_k  \pounds_{\eta^{(k)}}^T ( \pounds_{\eta^{(k)}}^T \nabla A_t(x)) \rmd t\\
&\qquad +\sum_k  \pounds_{\xi^{(k)}}^T \nabla A_t(x) \rmd W_t^{(k)}+\sqrt{2\nu}\sum_k  \pounds_{\eta^{(k)}}^T  \nabla A_t(x) \rmd B_t^{(k)}=0.\label{GradbacktolabelsEqIto}
 \end{align}
Applying the It\^{o} product formula, we have
\begin{align}\label{itoprodW}
\rmd_t w_t   &= \rmd (\nabla A_{t})^T\theta_t+  (\nabla A_{t})^T\rmd \theta_t+ \rmd\left[ (\nabla A_{t})^T, \theta_t \right]_t.
\end{align}
 Using Eqn. \eqref{GradbacktolabelsEqIto} and \eqref{thetaEqn} from Lemma \ref{thetaLemma}, we compute the quadratic variation term to be
 \be
  \rmd\left[ (\nabla A_{t})^T, \theta_t \right]_t= \sum_k  \pounds_{\xi^{(k)}}^T  (\nabla A_t)^T (\xi^{(k)} \cdot \nabla \theta_t  ) \rmd t+{2\nu}\sum_k  \pounds_{\eta^{(k)}}^T  (\nabla A_t)^T (\eta^{(k)} \cdot \nabla \theta_t  ) \rmd t.
 \ee
 We have also that
 \begin{align*}
 \rmd (\nabla A_{t})^T\theta_t&=- \pounds_{u_t}^T    (\nabla A_t)^T\theta_t\rmd t + \frac{1}{2}\sum_k  \pounds_{\xi^{(k)}}^T ( \pounds_{\xi^{(k)}}^T (\nabla A_t)^T) \theta_t\rmd t+ \nu\sum_k  \pounds_{\eta^{(k)}}^T ( \pounds_{\eta^{(k)}}^T \nabla A_t)^T)\theta_t \rmd t\\
&\qquad -\sum_k  \pounds_{\xi^{(k)}}^T (\nabla A_t)^T \theta_t \rmd W_t^{(k)}-\sqrt{2\nu}\sum_k  \pounds_{\eta^{(k)}}^T  (\nabla A_t)^T\theta_t \rmd B_t^{(k)},\\
(\nabla A_{t})^T\rmd \theta_t&= - (\nabla A_{t})^Tu_t\cdot  \nabla \theta_t \rmd t+\frac{1}{2}\sum_k(\nabla A_{t})^T (\xi^{(k)}\cdot \nabla )((\xi^{(k)}\cdot \nabla )\theta_t ) \rmd t\\
&\quad + \nu \sum_k (\nabla A_{t})^T(\eta^{(k)}\cdot \nabla )((\eta^{(k)}\cdot \nabla )\theta_t ) \rmd t -   \sum_k(\nabla A_{t})^T \xi^{(k)} \cdot \nabla \theta_t  \rmd W_t^{(k)} \\ 
&\quad  - \sqrt{2\nu}  \sum_k (\nabla A_{t})^T\eta^{(k)} \cdot \nabla \theta_t  \rmd B_t^{(k)}. 
 \end{align*}
 For any vector field $v$, one has the identity,
\begin{align}\nonumber
   \pounds_{v}^T w_t
   &=(\pounds_{v}^T (\nabla A_{t})^T)\theta_t +  (\nabla A_{t})^T(v\cdot\nabla) \theta_t\,.
\end{align}
Consequently, the form of the noise and first drift term in \eqref{wevol} are fixed.  Grouping the remaining terms in \eqref{itoprodW} involving $\xi$ and $\eta$,  using the identity  \eqref{lieComps} and then performing some straightforward but tedious computations, we obtain the stated evolution equation \eqref{wevol}.    We remark that  Kunita's formula \eqref{KunflowEqn} can be obtained by pairing (in $L^2(\Omega)$) the equation \eqref{wevol} with an arbitrary solenoidal vector field $v$ and integrating by parts.
 \end{proof}

 Finally, let $\tilde{u}_t(x):=  (\nabla A_{t}(x))^T u_0(A_{t}(x))$ so that $\ol{u}_t = \mathbb{E}[\mathbb{P}(\tilde{u}_t(x))|  \mathcal{F}_{t}^{\{W^{(k)}\}}]$.  Applying Lemma \ref{wLem} to the stochastic Weber velocity $\tilde{u}_t$, projecting onto divergence-free and averaging over the Brownian motions $\{B^{(k)}\}_{k\in \mathbb{N}}$, we deduce that $\ol{u}_t$ solves the following linear SPDE 
\begin{align}\label{linearprob}
 \rmd_t \ol{u}_t + \mathbb{P} \left(   \pounds_{u_t}^T \ol{u}_t - \frac{1}{2}  \sum_k \pounds_{\xi^{(k)}}^T( \pounds_{\xi^{(k)}}^T \ol{u}_t)- \nu  \sum_k \pounds_{\eta^{(k)}}^T( \pounds_{\eta^{(k)}}^T \ol{u}_t)\right)  \rmd t + \sum_{k} \mathbb{P}   \pounds_{\xi^{(k)}}^T \ol{u}_t   \rmd W_t^{(k)} &=0, 
 \end{align}
with initial condition  $\ol{u}_0=u_0$. Clearly, one solution of \eqref{linearprob} is $u_t$ itself.  Uniqueness of the inital value problem for this type of linear stochastic system with regular coefficients follows from the argument given in the proof of Proposition 11  of \cite{CFH17}. Thus, we conclude that $\ol{u}_t\equiv u_t$ for all $t\in [0,T]$ and therefore smooth solutions  $u_t$ of  Eq.  \eqref{uEqn1} solve the fixed-point problem \eqref{flowNS}, \eqref{stochweber1}.
 \end{proof}

\section{Discussion}

In this note, we have considered two classes of stochastic models of Eulerian incompressible fluid flow which differ in their nonlinear transport operators. These two classes may be compared explicitly in their vector field forms, upon defining the stochastic vector field for the transport velocity written in terms of the smooth, invertible, volume-preserving flow map $X_t$ in Eqn. \eqref{flow} as
\be
({ \rmd} X_t) X_t^{-1} := {u_t}dt + {\small \sum}_{k}\xi^{(k)}\circ d W^{(k)}_t\,.
\ee
The stochastic transport operator for the \emph{energy-conserving} stochastic fluid models we have treated here takes the form
\be\label{erg-cons-transport}
\rmd u_t + \mathbb{P}\big( (\rmd X_t X_t^{-1})\cdot\nabla u_t \big) = 0 \,.
\ee
However, stochastic fluid models with this transport operator \emph{do not conserve} the Kelvin circulation, unless the spatial gradients of their correlation eigenvectors $\xi^{(k)}$ all vanish. 

In contrast, the transport operator in the stochastic fluid models we have treated here that \emph{do conserve} Kelvin circulation take the form
\be\label{circ-cons-transport}
\rmd u_t + \mathbb{P}\big( \pounds^T_{(\rmd X_t X_t^{-1})}u_t \big) = 0 
\,,
\ee
where 
\be
\pounds^T_{\rmd X X^{-1}}u := ({\dot{X} X^{-1}}) \cdot \nabla u   
   \,+\, \big(\nabla ({\dot{X} X^{-1}})\big)^T\cdot  u \,. 
\ee
Thus, the transport operators for the two classes of stochastic Euler equations treated here,  in Eqn. \eqref{erg-cons-transport} which conserve energy and  in Eqn. \eqref{circ-cons-transport} which conserve circulations, only differ by a single term. 

Indeed, we have shown that the stochastic fluid equations in Eqn. \eqref{circ-cons-transport} are \emph{characterized} by the property that circulations are conserved (pathwise in case of Euler-type models and in mean for Navier-Stokes-type) on smooth solutions.  Brownian forces enter into these equations as a novel type of multiplicative noise; which involves the Lie derivative of the circulation velocity along the spatial correlation eigenvectors of the noise.  This structure has geometric significance which ensures that the stochastic equations retain the Lagrangian properties of circulation, vorticity and helicity which their deterministic counterparts possess.
However, stochastic fluid models with the transport operator in Eqn. \eqref{circ-cons-transport} turn out not to conserve \emph{energy}, unless unless the spatial gradients of their  correlation eigenvectors $\xi^{(k)}$ in the cylindrical noise all vanish.  

The difference between these two classes of stochastic models may appear small, especially since their transport operators exactly coincide in the deterministic case, where they each conserve both energy and Kelvin circulation. However, we have found that this difference has profound effects in the conservation properties of the  stochastic fluid models treated here.
Thus, the introduction of gradients into the spatial correlations of the cylindrical noise in these two classes of stochastic fluid models has introduced a sort of ``Sophie's choice" between conservation of either energy, or circulation, but not both.

 This comparison is summarized explicitly in the following table.
\vspace{2mm}

\begin{center}
\begin{tabular}{ |p{3cm}|p{3cm}|p{3cm}|p{3cm}|  }
 \hline
 \multicolumn{4}{|c|}{ Comparison of Energy and Circulation Properties of Stochastic Fluid Models} \\
 \hline
Euler\phantom{adadasfffdd}  Circulation Thm.& 
Euler\phantom{adadasfffdd} Energy Thm. &
Navier-Stokes \phantom{ada} Circulation Thm.&
Navier-Stokes\phantom{ada}  Energy Thm.\\
 \hline
Eqn. \eqref{uEqn1} with\phantom{ada}  \eqref{feq} and \eqref{sigeq}   & 
Eqn. \eqref{uEqn1} with\phantom{ada}  \eqref{Econseqnsf} and \eqref{EconseqnsSig}    &
Eqn. \eqref{uEqn1} with\phantom{ada}  \eqref{feqNS} and \eqref{sigeqNS}  &   
Eqn. \eqref{uEqn1} with\phantom{ada}  \eqref{EconseqnsNSf} and \eqref{EconseqnsNSSig}   \\
 \hline
\end{tabular}
\end{center}
\vspace{4mm}

With these models in hand, we must address the following important questions: what physical systems do they represent; what insights do they yield; and how can they be exploited in practice?
To address the first question, we mention the recent work of \cite{CGH17} which shows that the stochastic Euler-Poincar\'{e} equations (Eqn. \eqref{uEqn1} with  \eqref{feq} \& \eqref{sigeq}) arise naturally upon representing the deterministic Lagrangian flow map as a composition of smooth maps with two different time scales. The first map has slowly varying time dependence. It is followed by composition with the second map which has rapidly fluctuating time dependence, with zero mean when homogenized over the rapid time scale.  When dissipation is important, the corresponding  Navier-Stokes-Poincar\'{e} equations  (Eqn. \eqref{uEqn1} with   \eqref{feqNS} \& \eqref{sigeqNS}) arise from similar considerations. The result of \cite{CGH17} shows that this stochastic model is similar in spirit to a deterministic regularization of the Navier-Stokes equation called the LANS-$\alpha$ model, which has been proposed as a model for large-scale turbulence and also preserves a certain Kelvin circulation theorem \cite{HMR98,CFHOTW98,FHT01,FHT02,HJKLTW05}.

The above considerations motivate the utility of circulation-theorem preserving stochastic models as
reduced descriptions of nonlinear dynamical systems which account for effects  of the small, rapid, unresolvable scales of fluid motion on the variability of computationally resolvable.  See, respectively, \cite{CCHOS18a} and \cite{CCHOS18} for computational investigations of the Navier-Stokes-Poincar\'{e} equations in two dimensions for regions with fixed boundaries and for a 2-layer quasi-geostrophic model. See  also \cite{GH18} for a recent review, and see \cite{GH18a} for discussions of stochastic fluid models with non-stationary statistics.

On the other hand, for certain applications (depending on what observable the stochastic solution is intended to describe) it may be more important to enforce a pathwise energy balance.  In this case, the models treated here which preserve the corresponding energy theorems (Eqn. \eqref{uEqn1} with \eqref{Econseqnsf} \& \eqref{EconseqnsSig}  or  \eqref{EconseqnsNSf} \& \eqref{EconseqnsNSSig}) may be very useful.
In particular, another deterministic regularization of the Navier-Stokes equation due to Leray \cite{Leray1934} called the Leray-$\alpha$ model, which conserves energy in the absence of viscosity, has also been developed for simulations of turbulence and studied numerically in comparison with the LANS-$\alpha$ model \cite{GH2002,GH2003}. 

It is only when the noise-coefficients are spatially homogeneous that these two models simultaneously preserve their respective energy and circulation theorems. Thus, as is typical in the modeling business, an application-dependent choice must be made whenever implementing these SPDEs as a practical reduced description.  These issues are currently being explored and remain the subject of active and ongoing research.

\appendix

\section{Geometric background and notation} \label{app-GeometBckgrnd}

We discuss Kelvin's circulation theorem from a geometric viewpoint. To begin, let $X_t\in {\rm SDiff}(\Omega)$ be a volume-preserving diffeomorphism (i.e., smooth invertible flow, whose inverse is also smooth) which maps  the manifold without boundaries, $\Omega\subset \mathbb{R}^d$ onto itself. Introduce the \emph{transport velocity}, $u_t$, as a vector field, 
\[
u_t:= u(x,t):\Omega\times [0,T]\to \mathfrak{X}(\mathbb{R}^d)
\,,\]
where $\mathfrak{X}(\mathbb{R}^d)$ is the space of volume-preserving vector fields defined over $\mathbb{R}^d$; so that $\nabla \cdot u_t = 0$. 
Next, define the corresponding \emph{circulation velocity}, $u_t^\flat$, which appears in the integrand of Kelvin's theorem,  
\[
u_t^\flat:= u^\flat(x,t):\Omega\times [0,T]\to \Lambda^1(\mathbb{R}^d)\,.
\]
Here $u_t^\flat$ is in the space of  1-forms $ \Lambda^1(\mathbb{R}^d)$ dual to the divergence-free transport velocity vector fields, $u_t\in \mathfrak{X}(\mathbb{R}^d)$, under the $L^2$ pairing between the Lie algebra of vector fields and its dual, 
\[
\langle \,\cdot\,,\, \cdot\,\rangle:
\Lambda^1(\mathbb{R}^d) \times \mathfrak{X}(\mathbb{R}^d)\to \mathbb{R}\,,
\]
on the domain of flow $\Omega$. Here, the familiar musical operations flat ($\flat$) and its inverse sharp ($\sharp$) essentially lower and raise vector indices, respectively, although no Riemannian metric will be needed. For example, the operation $\flat: \mathfrak{X}(\mathbb{R}^d) \to  \Lambda^1(\mathbb{R}^d)$ maps a vector field into a 1-form, and vice versa for $\sharp$ (so that $(u_t^\flat)^\sharp=u_t$, for example). The musical notation which distinguishes between $u_t^\flat$ and $u_t$ helps one make proper mathematical sense of the operations of divergence, Lie derivative, Leray-Hodge projection, etc.

Suppose the transport and circulation velocities $u_t$ and $u_t^\flat$, respectively, together solve the incompressible Euler equations,
written in vector form as
\begin{align}
\begin{split}\label{Euler1form}
\partial_t u_t^\flat + (u_t\cdot  \nabla)  u_t^\flat &= - {\nabla}  p_t 
\end{split}
\end{align}
with scalar pressure function $p_t$, determined by solving the Poisson equation
$-\Delta p_t = (\nabla\otimes \nabla): (u_t\otimes    u_t^\flat )$.
The \emph{Kelvin theorem} in Eqn. \eqref{KelvinTheorem} now states that any smooth Euler solution $u_t$ has the property that for all loops $\Gamma \subset \Omega$, the circulation integral satisfies, 
\be\nonumber
\oint_{X_t(\Gamma)} \!\!\! u_t^\flat \,= \oint_{X_0(\Gamma)} \!\!\! u_0^\flat
\ee
where the time-dependent Lagrangian flow map $X_t$ with $X_{0} ={\rm id}$ is obtained by integrating the vector field 
\[
\dot{X}_t= u_t ( X_t )=: X_t^* u_t
\,,\]
where the asterix on $X_t^*$ denotes pull back by the smooth invertible map $X_t$.  Consequently, the transport velocity vector field in the Eulerian representation is given by 
\[
u_t ={\dot{X}_t X_t^{-1}}\in \mathfrak{X}(\mathbb{R}^d)
\,,\]
in which the right action on the tangent vector $\dot{X}_t$ by the inverse map $X_t^{-1}$ (shown as concatenation from the right) translates the tangent vector along the Lagrangian path back to the identity. Thus, the Eulerian  transport velocity vector field $u_t \in \mathfrak{X}(\mathbb{R}^d)$ is right-invariant. That is, $u_t ={\dot{X}_t X_t^{-1}}$ is invariant under the action of the diffeomorphisms from the right, upon transforming $X_t\to X_t \bar{X}_t$ for any other volume-preserving diffeomorphism, $\bar{X}_t\in {\rm SDiff} (\mathbb{R}^d)$. As we shall see, right-invariance is the key to understanding the Kelvin circulation theorem from the viewpoint of Noether's theorem.

The  Kelvin Theorem in \eqref{KelvinTheorem} offers some insight into the geometric meaning of the Euler fluid equations. 
 In the geometric notation introduced above, the calculation in Eq. \eqref{transthm} may be validated as
   \begin{align}\label{transthm-geom}
   \begin{split}
   \frac{\rmd}{\rmd t}\oint_{X_t(\Gamma)} u_t^\flat
   &:=
   \oint_{X_0(\Gamma)} \frac{\rmd}{\rmd t} \big(X_t^* u_t^\flat \big)
   \\&=
   \oint_{X_0(\Gamma)} X_t^* \Big(\big(\partial_t  + \pounds_{\dot{X}_t X_t^{-1}}\big)u_t^\flat\Big)
   \\&=
   \oint_{X_t(\Gamma)} \big(\partial_t  + \pounds_{\dot{X}_t X_t^{-1}}\big)u_t^\flat
   \\&=
   \oint_{X_t(\Gamma)} \big(\partial_t u_t + \big(({\dot{X}_t X_t^{-1}}) \cdot \nabla \big)u_t   
   \,+\, \nabla ({\dot{X}_t X_t^{-1}})^T\cdot  u_t  \big) \cdot \rmd \ell = 0\,.
   \end{split}
   \end{align}
  In the second line of this calculation, we have used the formula \cite{MarsdenHughes94}
  \be\label{DynLieDerivFormula}
   \frac{\rmd}{\rmd t} \big(X_t^* (u_t^\flat)\big) 
   =
   X_t^*\Big(\big(\partial_t  + \pounds_{\dot{X}_t X_t^{-1}}\big)u_t^\flat \Big),
  \ee
  in which the pull-back by the flow map $X_t$ acting on the Lie derivative $\pounds_{\dot{X}_t X_t^{-1}}u_t^\flat$ of a 1-form $u_t^\flat$ with respect to the vector field ${\dot{X}_t X_t^{-1}}$ is defined as the time derivative of the pull-back of the 1-form $u_t^\flat$ by the flow map $X_t$. 
  In the third line above in \eqref{transthm-geom}, transforming the integrand back into fixed Eulerian coordinates yields the Lie derivative itself, defined as the tangent of the pull-back, evaluated at the identity; which, as a vector expression is given by, 
  \be\label{LieDerivFormula}
   \pounds_{\dot{X}_t X_t^{-1}}u_t^\flat:= \Big[ \frac{\rmd}{\rmd t} \big(X_t^* u_t^\flat \big) \Big]_{\rm id}
   = \big(({\dot{X}_t X_t^{-1}}) \cdot \nabla \big)u_t   
   \,+\, \nabla ({\dot{X}_t X_t^{-1}})^T\cdot  u_t  \big) \cdot \rmd \ell \,,
  \ee
thereby finishing the calculation. 

  Now, in comparing Eq. \eqref{transthm}  with Eq. \eqref{LieDerivFormula}, we realize that the geometric meaning of the Euler fluid equations was disguised in Eq. \eqref{transthm}, by not distinguishing between the transport velocity vector field and the circulation velocity 1-form. Of course, this distinction is unnecessary in Euclidean coordinates. However, even in Euclidean coordinates we will benefit in what follows by keeping track of this distinction. In particular, the properties of the Lie derivative will be very useful to us in what follows; and the Lie derivative of a 1-form is not the same as  the Lie derivative of a vector field.

The Lie derivative of one (right-invariant, Eulerian) vector field $w$ by another one $\xi$ is defined by the following well known formula, see, e.g., \cite{HoMaRa1998, HoScSt2009, ArnoldKhesin1998},
\be
- \pounds_{\xi} w= -  \big( (\xi\cdot \nabla) {\boldsymbol w} - (w\cdot \nabla) {\boldsymbol\xi} \big) \cdot {\boldsymbol \nabla} :=  [\xi, w] =:  {\rm ad}_\xi w
\,.
\label{Lie-brkt}
\ee
In contrast, the Lie derivative of a 1-form $v^\flat$ by the vector field $\xi$ is given as in the calculation \eqref{transthm-geom} above as 
\be
\pounds_{\xi} v^\flat := \big( (\xi\cdot \nabla) {\boldsymbol v} +  ({\boldsymbol \nabla} \xi)^T \cdot v \big) \cdot \rmd {\boldsymbol \ell} =: {\rm ad}^*_\xi v^\flat
\,.
\label{Lie-deriv-1form}
\ee
In the pairing $\langle \cdot,\cdot \rangle_{L^2(\Omega)}$ with respect to the standard $L^2(\Omega)$ inner product, the operations ${\rm ad}$ and ${\rm ad}^*$ are dual to each other, being related by \cite{HoMaRa1998, HoScSt2009}
\be\label{ad-adstar}
\langle {\rm ad}^*_{\xi} v^\flat, w\rangle_{L^2(\Omega)}= \langle v^\flat, {\rm ad}_\xi w\rangle_{L^2(\Omega)}.
\ee
To simplify notation in what follows, we now define the adjoint operator $\pounds_{\xi}^T$ by the identity,
\be\label{adjointform}
( \pounds_{\xi}^T v, w )_{L^2(\Omega)} := \langle (\pounds_{\xi}^T v)^\flat, w\rangle_{L^2(\Omega)}
= \langle \pounds_{\xi} v^\flat, w\rangle_{L^2(\Omega)} = - \langle v, \pounds_{\xi} w\rangle_{L^2(\Omega)},
\ee
where the round brackets  $( \cdot,\cdot )_{L^2(\Omega)}$  denote the usual $L^2$ integral of the dot product of vector-valued functions. 
Consequently, $(\pounds_{\xi}^T v)^\flat = \pounds_{\xi}v^\flat$, upon identifying corresponding terms. This relation follows due to the nondegeneracy of the $L^2(\Omega)$ pairing for a manifold without boundaries. It may also be verified by substituting \eqref{Lie-brkt} into \eqref{adjointform} and integrating by parts.

Upon taking the $\sharp$ of Eqn. \eqref{Euler1form} to transform it from 1-forms to vector fields and applying the Leray-Hodge projection  $\mathbb{P} $, it becomes
$
 \rmd u_t + \mathbb{P}  ( \pounds_{u_t}^T u_t) \rmd t  =0.
$
Thus, the corresponding equation for the vector field $u_t = \boldsymbol{u}_t \cdot  \boldsymbol{\nabla}$ can be expressed as
\be \nonumber
 \rmd u_t +  \mathbb{P} \Big( {\rm ad}^\dagger_{u_t} u_t\Big)  \,  \rmd t = 0\,,
\ee
where the binary operation among vector fields ${\rm ad}^\dagger: \mathfrak{X}\times \mathfrak{X}\to \mathfrak{X}$ is defined for vector fields $\xi$ and $v$ by
\be\label{ad-dagger}
{\rm ad}^\dagger_\xi v := ({\rm ad}^*_{\xi} v^\flat) = (\pounds_{\xi} v^\flat) =: \pounds_{\xi}^T v 
\,.\ee
Having identified 
\[
{\rm ad}^*_{\xi} v^\flat 
= \pounds_{\xi}v^\flat =: (\pounds_{\xi}^T v)^\flat
\quad \hbox{and}\quad
{\rm ad}^\dagger_\xi v = \pounds_{\xi}^T v\,, 
\]
from equations \eqref{ad-adstar}, \eqref{adjointform} and \eqref{ad-dagger}, we see that the musical notations sharp ($\sharp$) and flat ($\flat$) can now be replaced by the simpler $\pounds_{\xi}^T$ notation. Namely,  in what follows, we will distinguish notationally between components of Lie-derivative operations on vector fields and 1-forms as,
 \be\label{lietrans}
- \pounds_{\xi} w := [\xi,w] = (\xi\cdot \nabla) w - (w\cdot \nabla) \xi
\quad\hbox{and}\quad
(\pounds_{\xi}v^\flat)^\sharp  :=  \pounds_\xi^T v := \xi \cdot \nabla v+ \nabla \xi \cdot v
\,.
 \ee
This notation distinguishes between divergence free vector fields and their $L^2$-dual 1-forms only by whether the action of vector fields $\xi$ on them appears as $\pounds_\xi$ or $\pounds_\xi^T$. We note that the operation $\pounds_{\xi}^T$ is denoted as $\mathcal{B}(\xi,\,\cdot\,)$ in \cite{ArnoldKhesin1998}, as may be identified in the following relation,
\[
(\pounds_{\xi}^T v, w)_{L^2(\Omega)} = (\mathcal{B}(\xi,v), w)_{L^2(\Omega)}
\,.
\]
The operator $\mathcal{B}$ in \cite{ArnoldKhesin1998} is distinct from $B(w,v):= \mathbb{P}(w \cdot \nabla v)$ (see e.g. \cite{CF88}) introduced for Eqn. \eqref{Econseqnsf}.

\begin{rem}[Commutator in three-dimensional Euclidean space]
As we see above in Eqn. \eqref{lietrans}, the commutator of two (right-invariant) vector fields is (minus) their Lie derivative.
The commutator of divergence-free vector fields in a three-dimensional Euclidean
space $\mathbb{R}^3$ is given by the formula 
\be\nonumber
- \pounds_{\xi} w:=[\xi,w] = {\rm curl}(\xi \times w)\,,
\ee
where $\xi \times w$ is the cross product.  Hence, we may rewite the relations in Eqn. \eqref{adjointform} in this notation as
   \begin{align}\nonumber
   \begin{split}
( \pounds_{\xi}^T v, w )_{L^2(\Omega)} &=  \big( v, [\xi,w] \big)_{L^2(\Omega)}
 =  \big( v, {\rm curl}(\xi \times w) \big)_{L^2(\Omega)}
 \\& =  
      \big( {\rm curl} v, \xi \times w \big)_{L^2(\Omega)}
  =  \big(  - \xi \times {\rm curl}v , w \big)_{L^2(\Omega)}.
   \end{split}
\end{align}
Thus, we find, in ordinary vector notation, 
\be \nonumber
\pounds_{\xi}^T v = - \,\xi \times {\rm curl}v  
\,,
\ee
modulo a gradient term, since $ \nabla \cdot w = 0$. 

\end{rem}


\section{Variational Principle for the Stochastic Euler-Poincar\'{e} equations} \label{appendVar}

In this appendix, we treat only the formal aspects of stochastic variational principles in infinite dimensions, for the purpose of modelling time-dependent spatial correlations. As discussed in Remark \ref{localExRem}, some of the fundamental questions in analysis for the stochastic 3D Euler--Poincar\'{e} fluid model have been answered in \cite{CFH17}, who proved local in time existence, uniqueness and well posedness of their solutions in regular spaces, as well as a Beale-Kato-Majda blow-up criterion for these equations. These are precisely the same analytical properties as for the deterministic 3D Euler fluid equations. Thus, in this case, introducing stochasticity that preserved the geometric properties of the Euler fluid equations also preserved their analytical properties. The corresponding questions still remain open for the other stochastic fluid models discussed here.

\subsection{The stochastic Hamilton--Pontryagin variational principle \cite{GH18a}}

We proceed formally here and below to derive the \emph{stochastic Euler--Poincar\'{e} equations} in \eqref{e1}, by considering the \textit{reduced stochastic Hamilton-Pontryagin  (RSHP) principle} in which the Lagrangian path in Eqn. \eqref{flow} in Theorem \ref{ThmKE} is written in Eulerian coordinates and imposed as a constraint on variations in the Eulerian representation of Hamilton's principle, as
\begin{equation}\label{Reduced_SVP}
\delta \int_0^T \Big[\, l (u_t)dt
+ \Big\langle m, ({ \rmd}X_t)X_t^{-1} - u_tdt 
- \sum_{k}\xi^{(k)}\circ d W^{(k)}_t \Big\rangle_{L^2(\Omega)}  \,\Big]=0\,,
\end{equation}
with respect to variations $ \delta u_t, \delta X_t, \delta m $, for the Lagrangian functional  $l (u_t)$.
This is the reduced stochastic Hamilton-Pontryagin (RSHP) principle found in \cite{GH18a}. Its Eulerian stationarity conditions are
\begin{align*}
\delta m:&\quad ({ \rmd} X_t) X_t^{-1} = {u_t}dt + {\small \sum}_{k}\xi^{(k)}\circ d W^{(k)}_t
\,,\\
\delta u_t:& \quad  
\frac{\delta l }{\delta {u_t}}=m
\,,\\
\delta X_t:& \quad  
{ \rmd} m+ \operatorname{ad}^*_{({ \rmd} X_t) X_t^{-1}}m=0,
\end{align*}
where we have applied the formula for integration by parts for a Stratonovich stochastic process \cite{P2005} in computing the dynamics of the Lagrange multiplier, $m$. In this computation, we have also used the relation,
\[
\delta\big( ( \rmd X_t) X_t^{-1} \big) = \rmd w - {\rm ad}_{( \rmd X_t) X_t^{-1}}w
\,,\quad\hbox{for the vector field}\quad
w = ( \delta X_t) X_t^{-1}
\]
and dropped the endpoint term $\langle m , w \rangle|_0^T$, since the variation $\delta X $ vanishes at the endpoints of interval $[0,T]$. \smallskip

In the Euler fluid case, the Lagrangian is the fluid kinetic energy
 \[
l (u_t)=\frac12 \|u_t\|_{L^2(\Omega)}^2
\]
 and its variation with respect to the velocity vector field is given by the circulation 1-form,
\[
m = \frac{\delta l }{\delta {u_t}} = u_t^\flat
\,.
\]
Now, taking the $\sharp$ of the variational equation for $m$ above and using the divergence free property of the vector field $({ \rmd} X_t)X_t^{-1} $ in the pairing, yields the velocity vector-field equation,
\[
0 =   {\rmd} (\mathbb{P} m^\sharp) + \mathbb{P} ( \pounds_{ ( \rmd X_t) X_t^{-1} } m)^\sharp 
   =  {\rmd} u_t   +  \mathbb{P} ( \pounds^T_{ ( \rmd X_t) X_t^{-1} } u_t ) \,.
\]
Thus, for the Euler case, the stochastic RSHP principle in \eqref{Reduced_SVP} yields the stochastic Euler--Poincar\'{e} motion equation in \eqref{e1}. 

The Eulerian vector field $({ \rmd} X_t) \,X_t^{-1} \in \mathfrak{X}(\mathbb{R}^d)$ is invariant under the action of the diffeomorphisms from the right, given by $X_t\to X_t \bar{X}$ for any fixed volume-preserving diffeomorphism $\bar{X}\in {\rm SDiff} (\mathbb{R}^d)$. Since the motion of a Lagrangian trajectory is given by applying $X_t$ to an initial condition $x_0$, this symmetry simply corresponds to well-known invariance of the Eulerian fluid velocity vector field $u_t$ under relabelling of the Lagrangian coordinates as $x_0\to \bar{X}x_0$. 

\subsection{\bf Noether's theorem and preservation of Kelvin circulation.}
The endpoint term arising from integration by parts in the RSHP variational principle is $\langle m , w \rangle$, as shown above. Vanishing of the endpoint term leads to the variational equations of motion. However, according to Noether's theorem, if $\delta S = 0$ due to invariance of the Lagrangian under a Lie symmetry transformation, then the endpoint term will keep its value under the evolution governed by variational equations. In the present case, the right-invariant vector field $w$ generates an arbitrary time-independent diffeomorphism of the reference flow domain, under which the Lagrangian is invariant, since the Eulerian representation is invariant under a volume-preserving diffeomorphism of the Lagrangian parcel labels.  

In the Euler fluid case, $m$ is a 1-form density and the quantity $m/D$ is a 1-form, although we can ignore the difference, since $D=1$ results as the Jacobian for the flow map generated by a divergence-free vector field. Thus, we can regard $m = u_t^\flat = \mathbf{u} \cdot d\mathbf{x}$ as simply a 1-form, which is evolving by coadjoint action on it by the diffeomorphism $X_t$, so that it satisfies
\[
 X_t^* \big( { \rmd} m+ \operatorname{ad}^*_{({ \rmd} X_t) X_t^{-1}}m \big) = \frac{d}{dt}\big( X_t^*  m \big)
 = - X_t^* (dp)
\,,
\]
where $X_t^* $ is the pullback of the Lagrangian flow and we have introduced $-dp=-{\boldsymbol \nabla} p\cdot d {\bold x}$ to account for incompressibility of $m^\sharp$. 
The previous equation implies that the integral of the 1-form $m$ around any loop that moves with the flow is  constant, as a result of its RSHP equation of motion. Thus, by Noether's theorem, invariance of the Eulerian form of the fluid Lagrangian under fluid particle relabelling implies preservation of Kelvin's circulation integral.

\begin{rem}[Conservation of helicity] \label{helicityRem}
The previous equation is equivalent to the Eulerian expression,
\[
\rmd m = - \pounds_{({ \rmd} X_t) X_t^{-1}}m - dp\,.
\]
Consequently, the stochastic evolution of the helicity density (a 3-form) is given by
\begin{align*}
\rmd (m \wedge dm) &= - \Big(\pounds_{({ \rmd} X_t) X_t^{-1}}m + dp \Big)\wedge dm)
- m \wedge \Big(\pounds_{({ \rmd} X_t) X_t^{-1}}dm\Big)
\\&= - \pounds_{({ \rmd} X_t) X_t^{-1}}(m \wedge dm) - d\big(p\, dm \big)
\\ &= 
- \, {\rm div} \Big( \big( ({ \rmd} X_t) X_t^{-1}\big) (\mathbf{u}\cdot {\rm curl }\mathbf{u})
+  p\, {\rm curl }\mathbf{u} \Big) d^3x\,.
\end{align*}
For homogeneous boundary conditions, this implies the conservation of the \emph{helicity integral},
\[
\rmd \int_{\Omega} (m \wedge dm)
=
\rmd \int_{\Omega} \mathbf{u}\cdot {\rm curl }\mathbf{u} \,d^3x
= 0
\,,
\]
which is interpreted as the conservation of the average self-linking number of vorticity field lines, \cite{AK99,S18}.
\end{rem}

\subsection{\bf Passing to the Lie-Poisson Hamiltonian formulation.}
The Noether quantity also plays an important geometric role on the Hamiltonian side. The reduced Legendre transformation in the Eulerian representation is given by, cf. Eqn. \eqref{Reduced_SVP},
\begin{align}\label{LegXform}
\begin{split}
h(m) &= \Big\langle m , ({\rmd}X_t)X_t^{-1}  \Big\rangle_{L^2}  
- \Big[\, l (u_t)dt
+ \Big\langle m, ({\rmd}X_t)X_t^{-1} - u_tdt 
- \sum_{k}\xi^{(k)}\circ d W^{(k)}_t \Big\rangle_{L^2}  \,\Big] 
\\&=
\Big\langle m, u_tdt + \sum_{k}\xi^{(k)}\circ d W^{(k)}_t \Big\rangle_{L^2}   - l (u_t)dt
\\&=
\frac12 \Big\langle m, m^\sharp \Big\rangle_{L^2}  dt + \Big\langle m, \sum_{k}\xi^{(k)}\Big\rangle_{L^2}  \circ d W^{(k)}_t 
\,,
\end{split}
\end{align}
where we have used $l (u_t) = \frac12 \langle u^\flat , u \rangle_{L^2}  = \frac12 \langle m, m^\sharp \rangle_{L^2} $ and the symmetry of the pairing $\langle \,\cdot\,,\,\cdot\,\rangle_{L^2} $ to simplify and regroup terms in the final step of deriving the reduced Hamiltonian, $h(m)$. We note that the stochastic part of the Hamiltonian $h(m)$ in \eqref{LegXform} couples the noise to the momentum map by $L^2$ pairing. The variational derivative of $h(m)$ with respect to $m$ returns the original stochastic Eulerian vector field, 
\[
\frac{\delta h}{\delta m} = m^\sharp dt + \sum_{k}\xi^{(k)} \circ d W^{(k)}_t = ({\rmd}X_t)X_t^{-1}\,.
\]
Finally, we may rearrange the Euler fluid motion equation into the Lie-Poisson Hamiltonian form \cite{HoMaRa1998}
\[
\rmd m = -  \operatorname{ad}^*_{\delta h/\delta m} m = \big\{m , h(m) \big\}\,,
\]
in which the stochastic Hamiltonian is given above in the last line in Eqn. \eqref{LegXform}, and the Lie-Poisson bracket for functionals $f$ and $h$ is defined by
\be\label{Ham-form}
\rmd  f(m) =
\big\{f(m) , h(m) \big\} := - \left\langle m \,,\, {\rm ad}_{\delta h/\delta m} \frac{\delta f}{\delta m} \right\rangle_{L^2} 
= - \left\langle m \,,\, \left[\frac{\delta h}{\delta m}\,,\, \frac{\delta f}{\delta m} \right]\right\rangle_{L^2} 
.
\ee
This Lie-Poisson bracket satisfies the Jacobi identity, because it is a linear functional of the Lie bracket for the Lie algebra of divergence-free vector fields, which is known to satisfy the Jacobi identity.

Nonconservation of the deterministic energy under this Hamiltonian dynamics can be checked easily by setting $f(m)=\frac12 \langle m, m^\sharp \rangle = \frac12\|u_t\|^2_{L^2}$ in the Eqn. \eqref{Ham-form} and denoting $\Xi:=\sum_{k}\xi^{(k)} \circ d W^{(k)}_t$, to find, 
\begin{align*}
\rmd \frac12 \langle m, m^\sharp \rangle_{L^2}  &= \big\{\frac12 \langle m, m^\sharp \rangle_{L^2(\Omega)} ,h(m)\big\}
= - \big\langle     {\rm ad}^*_{(\rmd X_t X_t^{-1})} m, u   \big\rangle_{L^2} 
\\&=
- \big\langle \pounds^T_{\Xi} u\,,\, u \big\rangle_{L^2} 
= \big\langle \Xi \times {\rm curl} u\,,\, u \big\rangle_{L^2}
= \big\langle - u \times {\rm curl} u\,,\, \Xi \big\rangle_{L^2}
\\&=
 \big\langle u \cdot \nabla u\,,\, \Xi \big\rangle_{L^2}
 = - \int_\Omega u_i \Xi^i_{,\,j} u^j\,d^3x\,,
\end{align*}
where we have used the divergence-free property twice in the last line, when integrating by parts. 
This result is the Stratonovich version of Eqn. \eqref{erg-noncons} when viscosity $\nu$ is absent. Namely, the original deterministic fluid kinetic energy is not conserved under the evolution of the circulation conserving stochastic fluid model, unless the spatial gradients of the correlation eigenvectors $\xi^{(k)}$ vanish.

\begin{rem}[Purely stochastic passive 1-form transport]
If we simply drop the fluid kinetic energy in the total Hamiltonian $h(m)$ in Eqn. \eqref{LegXform}, then only the stochastic part would remain. Consequently, the Lie-Poisson bracket in \eqref{Ham-form} would produce a \emph{linear} passive 1-form transport equation  given by
\[
{ \rmd} m+ \operatorname{ad}^*_{\big(\sum_{k}\xi^{(k)} \circ\, d W^{(k)}_t \big)}m= - dp,
\]
where $p$ is determined by requiring that the gauge $\nabla \cdot A_t=0$ be preserved.  In our other notation, the above equation can be written for a 1-form $A_t$  as
\be\label{Aeqn}
\rmd A_t - \sum_k \mathbb{P}( \pounds_{\xi^{(k)}}^T A_t) \circ \rmd W_t^{(k)} = 0 \,,
\ee
where $\pounds_{\xi^{(k)}}^T A_t = \xi^{(k)}\cdot \nabla A_t + (\nabla \xi^{(k)})^T \cdot A_t$,  in vector notation.
Eqn. \eqref{Aeqn} is the dual problem to the passive Lie-transport equation for the vector field $B_t= {\rm curl} A_t$, 
\be\label{Beqn}
\rmd B_t +  \sum_k \pounds_{\xi^{(k)}} B_t \circ \rmd W_t^{(k)}= 0\,,
\ee
where $ \pounds_{\xi^{(k)}} B_t = (\xi^{(k)}\cdot \nabla) B_t - (B_t\cdot \nabla) \xi^{(k)}$ in vector notation.  Note that, since $\nabla \cdot A_t =0$, the field $A_t$ can be recovered uniquely from $B_t$ via the Biot-Savart law $A_t = (-\Delta )^{-1} \curl(B_t)$.  In parallel with Remark \ref{helicityRem}, for this linear stochastic transport problem, the \emph{magnetic helicity} $ \langle A_t, B_t\rangle_{L^2(\Omega)}$ is conserved pathwise.  

The equation \eqref{Beqn} is known as the Kazantsev-Kraichnan model of kinematic dynamo, in which $B_t$ represents a transported magnetic field by a white-in-time Gaussian advecting velocity which is typically assumed to be spatially rough  \cite{Kaz68,Kr67,Kr68}. Not unexpectedly, when the noise correlates  $\{\xi^{(k)}\}_{k\in \mathbb{N}}$ are smooth, the Kelvin theorem for Eq. \eqref{Aeqn} preserves the circulation around closed loops which are transported along stochastic Lagrangian paths in the Stratonovich sense. In this setting, the circulation integral represents the gauge-invariant magnetic flux and the conservation law corresponds to Alfv\'{e}n's theorem. The stochastically propagating closed loops must each retain its linkage number; since diffeomorphisms cannot change the topology of a curve embedded in the flow, even if the flow has a stochastic time dependence.   This may fail  to be true in the Kazantsev-Kraichnan model  in which the fields  $\{\xi^{(k)}\}_{k\in \mathbb{N}}$ are assumed to be only H\"{o}lder continuous $C^\alpha(\Omega)$ with exponent $\alpha\in (0,1)$.  In this case, Lagrangian trajectories in fixed realizations of the advecting Gaussian velocity may become non-unique and the phenomenon of spontaneous stochasticity \cite{BGJ98,D17} must be accounted for when discussing Lagrangian transport properties, see  \cite{DE17,Ey2007}.
\end{rem}

\bigskip

\noindent {\bf Acknowledgments}  
We are enormously grateful for many helpful and  inspiring discussions with P. Constantin, A.B. Cruzeiro, D. Crisan, G.L. Eyink, F. Flandoli, J.-M. Leahy, E. M\'emin, H.Q. Nguyen, T.S. Ratiu, V. Resseguier and S. Takao.
Research of TD is partially supported by NSF-DMS grant 1703997. DDH is grateful for partial support by the EPSRC Standard Grant EP/N023781/1.

\end{document}